\newcommandx{\unsure}[2][1=]{\todo[linecolor=red,backgroundcolor=red!25,bordercolor=red,#1]{#2}}
\newcommandx{\change}[2][1=]{\todo[linecolor=blue,backgroundcolor=blue!25,bordercolor=blue,#1]{#2}}
\newcommandx{\info}[2][1=]{\todo[linecolor=OliveGreen,backgroundcolor=OliveGreen!25,bordercolor=OliveGreen,#1]{#2}}
\newcommandx{\improvement}[2][1=]{\todo[linecolor=Plum,backgroundcolor=Plum!25,bordercolor=Plum,#1]{#2}}
\newcommandx{\thiswillnotshow}[2][1=]{\todo[disable,#1]{#2}}
\crefname{assumption}{assumption}{assumptions}  
\Crefname{assumption}{Assumption}{Assumptions}  
\crefname{counterexample}{counterexample}{counterexamples}
\Crefname{counterexample}{Counterexample}{Counterexamples}
\crefname{problem}{problem}{problems}
\Crefname{problem}{Problem}{Problems}
\crefname{table}{table}{tables}
\Crefname{table}{Table}{Tables}
\theoremstyle{plain}
\theoremstyle{plain}
\newtheorem{theorem}{Theorem}[section]
\newtheorem{corollary}[theorem]{Corollary}
\newtheorem{proposition}[theorem]{Proposition}
\newtheorem{lemma}[theorem]{Lemma}
\theoremstyle{definition}
\newtheorem{definition}[theorem]{Definition}
\newtheorem{problem}[theorem]{Problem}
\theoremstyle{remark}
\newtheorem{remark}[theorem]{Remark}
\theoremstyle{definition}
\theoremstyle{remark}
\numberwithin{equation}{section}
\theoremstyle{definition}
\theoremstyle{remark}
\numberwithin{equation}{section}
\newcommand{\X}{\mathcal{X}}
\newcommand{\K}{\mathcal{K}}
\newcommand{\Y}{\mathcal{Y}}
\DeclareMathOperator{\Lip}{Lip}
\title[\bf SCI Of Koopman Approximate Point Spectra In \texorpdfstring{$L^{\MakeLowercase{p}}$}{Lp}, \texorpdfstring{$1<\MakeLowercase{p}<\infty$}{1<p<infty}]{\bf Residual SCI Upper Bounds And Lower Witnesses For Koopman Approximate Point Spectra In \texorpdfstring{$L^{\MakeLowercase{p}}$}{Lp} For \texorpdfstring{$1<\MakeLowercase{p}<\infty$}{1<p<infty}: Extended Version}
\begin{document}

\author[C.~Sorg]{Christopher Sorg$^1$}
	\address[C.~Sorg]{
		\textup{Chair for Theoretical computer science, mathematics, and operations research}
		\newline \indent
		\textup{Department of Computer Science} \newline \indent
		\textup{University of the Bundeswehr Munich}
		\newline \indent
		\textup{85577 Neubiberg, Germany}}

\email{{\href{mailto:chr.sorg@unibw.de}{\textcolor{blue}{\texttt{chr.sorg@unibw.de}}}}
}

\footnotetext[1]{Inf1, University of the Bundeswehr Munich, Werner-Heisenberg-Weg 39, 85577 Neubiberg, Germany}

\begin{abstract}
We study residual computation of approximate point spectral sets of bounded Koopman operators \texorpdfstring{$\mathcal K_F$}{KF} on \texorpdfstring{\(L^p(\mathcal X,\omega)\)}{Lp(X,w)}, \texorpdfstring{\(1<p<\infty\)}{1<p<oo}, where \texorpdfstring{\(\mathcal X\)}{X} is a compact metric space and \texorpdfstring{\(\omega\)}{w} is a finite Borel measure. The input is the underlying map \texorpdfstring{\(F : \mathcal X \to \mathcal X\)}{F:X->X}, accessed through point evaluations, and the output metric is the Hausdorff metric on non-empty compact subsets of \texorpdfstring{\(\mathbb C\)}{C}. For a bounded operator \texorpdfstring{\(T\)}{T}, we distinguish the regularized approximate point \texorpdfstring{\(\varepsilon\)}{epsilon}-pseudospectrum \texorpdfstring{$R_{\mathrm{ap},\varepsilon}(T)$}{Rap,epsilon(T)} from the closed approximate point \texorpdfstring{\(\varepsilon\)}{epsilon}-pseudospectrum \texorpdfstring{$C_{\mathrm{ap},\varepsilon}(T)$}{Cap,epsilon(T)}. The latter is the direct closed lower-norm analogue of the approximate point \texorpdfstring{\(\varepsilon\)}{epsilon}-pseudospectrum used in the \texorpdfstring{\(L^2\)}{L2} Koopman SCI theory. Using continuous finite-dimensional dictionaries and tagged quadrature residuals, we prove SCI upper bounds for \texorpdfstring{\(R_{\mathrm{ap},\varepsilon}(T)\)}{Rap,epsilon(T)}, \texorpdfstring{\(C_{\mathrm{ap},\varepsilon}(T)\)}{Cap,epsilon(T)}, and \texorpdfstring{\(\sigma_{\mathrm{ap}}\)}{specap} on four natural classes of maps: continuous nonsingular maps, maps with a prescribed modulus of continuity, measure-preserving maps, and maps satisfying both measure preservation and a prescribed modulus. In the measure-preserving case the two fixed-\texorpdfstring{\(\varepsilon\)}{epsilon} targets coincide, because the Koopman operator is an \(L^p\)-isometry. We also prove fixed-\texorpdfstring{\(\varepsilon\)}{epsilon} sharpness on a known-modulus measure-preserving witness class and a boundary lower obstruction for the closed nonsingular problem. Finally, we identify the remaining no-modulus sharpness problem for the exact approximate point spectrum and explain why the natural locking strategy cannot prove it.
\end{abstract}

\maketitle

\begin{center}\small
\textbf{Keywords:} Koopman operators \(\cdot\) approximate point spectrum \(\cdot\) pseudospectra \(\cdot\) \texorpdfstring{\(L^p\)}{Lp}-spaces \(\cdot\) residual algorithms \(\cdot\) Solvability Complexity Index
\end{center}
\tableofcontents

\section{Introduction}\label{sec:intro}

Koopman's operator-theoretic formulation represents nonlinear dynamics by a linear operator acting on observables rather than directly on states. Its Hilbert-space origins go back to Koopman's paper
\cite{koopman1931hamiltonian}, the Koopman-von Neumann work on dynamical systems with continuous spectra \cite{koopman1932dynamical}, and von Neumann's operator-method formulation of classical mechanics \cite{vonneumann1932operatorenmethode}. The modern data-driven revival of this viewpoint, strongly influenced by Mezi\'c's work \cite{mezic2005spectral}, has led to algorithms such as Dynamic Mode Decomposition (DMD), Extended Dynamic Mode Decomposition (EDMD), and many related variants; see, for example, \cite{rowley2009spectral,schmid2010dynamic,budivsic2012applied,brunton2022modern,
colbrook2024multiverse,colbrook2025introguide}.

There has also been substantial progress on rigorous convergence and residual control for data-driven Koopman approximations. Arbabi and Mezi\'c proved convergence of a class of DMD/Hankel-DMD algorithms for computing Koopman eigenvalues and eigenfunctions under ergodicity hypotheses \cite{arbabi2017ergodic}. Korda, Putinar, and Mezi\'c developed moment-based data-driven spectral-measure methods for measure-preserving systems \cite{korda2020data}. ResDMD, introduced by Colbrook and Townsend, provides a residual-based method with convergence guarantees for computing spectra and pseudospectra of general Koopman operators from snapshot data without spectral pollution \cite{colbrook2024rigorous}. Related residual and verification ideas appear in robust and verified Koopmanism \cite{colbrook2023residual}. For spectral measures and generalized eigenfunction decompositions, Rigged DMD provides data-driven generalized eigenfunction expansions and convergence results for spectral measures \cite{colbrook2025rigged}. Variance-based residual ideas for stochastic Koopman operators were developed in \cite{colbrook2024beyond}, and recent variance representations and convergence-rate estimates for EDMD under i.i.d.\ and ergodic sampling appear in \cite{philipp2024variance}. Auxiliary functions as Koopman observables provide another data-driven route, connected with polynomial optimization, for dynamical-systems \cite{bramburger2024auxiliary}. 

The recent surveys \cite{colbrook2024multiverse,colbrook2025introguide} give broader accounts of DMD-type algorithms, residual methods, and rigorously convergent Koopman learning.

Despite these developments, a basic foundational question remains open outside the Hilbert-space setting:
\begin{align*} \textbf{Q:} \text{ What is } &\text{the intrinsic algorithmic complexity of approximating Koopman}\\ &\text{spectra in a way that is robust to noise and admits verification? } \end{align*}
Here algorithmic complexity is understood in the sense of the \textit{Solvability Complexity Index} (SCI), which measures the minimal number of nested limits required by any admissible algorithm to solve e.g. an infinite-dimensional computational problem \cite{hansen2011solvability,ben2015new,colbrook2022foundations}. The SCI framework has revealed sharp barriers for spectral computation: some spectral problems admit no single-limit procedure with global error control, some become solvable only by finite-height towers of algorithms, and others may have infinite SCI; see, for example, the operator-theoretic classifications in \cite{hansen2011solvability,ben2015new,colbrook2022foundations} and the Koopman \(L^2\) results in \cite{colbrook2024limits}.

This paper studies bounded Koopman operators
\[
        \K_F f=f\circ F
\]
on \(L^p(\X,\omega)\), \(1<p<\infty\), where \((\X,d_\X)\) is a compact metric space, $F:\X \to \X$ and \(\omega\) is a finite Borel measure. The SCI computation model is a point-evaluation oracle for the underlying map \(F\), and the output metric is the Hausdorff metric on non-empty compact subsets of \(\mathbb C\). We work with four natural input classes: continuous nonsingular maps, maps with a prescribed modulus of continuity, measure-preserving maps, and maps satisfying both measure preservation and a prescribed modulus.

\section{Preliminaries And Definitions}\label{sec:preliminaries}

This section recalls only the SCI terminology needed later. The Koopman
operator and the approximate point spectral targets are introduced in
\cref{sec:KoopLp}.  We work throughout with set-valued
outputs in the Hausdorff metric space \(\mathcal M_H\) of non-empty compact subsets of
\(\mathbb C\).  This is the metric setting in which convergence excludes both
spectral pollution and spectral invisibility.

The definitions below follow the general SCI framework used in
\cite{colbrook2024limits}.  We state them only in the form needed for the present paper,
namely exact point-evaluation oracles and their \(\Delta_1\)-information
variant.

\subsection{General Algorithms And Towers Of Algorithms}\label{subsec:towers}

Formally, given a set $\Omega$ and $\Lambda$, a \textit{general algorithm} is a map
$\Gamma\colon\Omega\rightarrow\mathcal{M}$ that queries only finitely many
evaluations from~$\Lambda$ on each input, subject to the consistency ''axiom'' of \cite[Definition A.1]{colbrook2024limits}, because otherwise it is impossible to recover a problem function $\Xi$ from an evaluation set $\Lambda$, which will be precised in the following.

\begin{definition}[Computational problem {\cite[Definition A.1]{colbrook2024limits}}]
\label{def:comp_prob}
The basic objects of a computational problem are:
\begin{itemize}
	\item A \textit{primary set}, $\Omega$, that describes the input class;
	\item A \textit{metric space} $(\mathcal{M},d)$;
	\item A \textit{problem function} $\Xi:\Omega\rightarrow\mathcal{M}$;
	\item An \textit{evaluation set}, $\Lambda$, of complex-valued functions on $\Omega$.
\end{itemize}
The problem function $\Xi$ is the object we want to compute, with the notion of convergence captured by the metric space $(\mathcal{M},d)$. The evaluation set $\Lambda$ describes the information that algorithms can read. We require that $\Lambda$ separates elements of $\Omega$ to the degree of separation achieved by $\Xi$:
\begin{equation}
\label{eq:consistencyAxiom}
\text{if }A,B\in\Omega\text{ with }\Xi(A)\neq \Xi(B),\text{ then } \exists f\in\Lambda\text{ with }f(A)\neq f(B).
\end{equation}
In other words, any $\Xi(A)\in\mathcal{M}$ is uniquely determined by the set of evaluations $\{f(A):f\in\Lambda\}$, an axiom, which we will call consistency axiom in the following. It is necessary, because otherwise it is impossible to recover $\Xi$ from $\Lambda$, as stated before. We refer to the collection $\{\Xi,\Omega,\mathcal{M},\Lambda\}$ as a \textit{computational problem}.
\end{definition}

By an algorithm we mean then the following.
\begin{definition}[General algorithm  {\cite[Definition A.3]{colbrook2024limits}}]
\label{def:Gen_alg}
Given a computational problem $\{\Xi,\Omega,\mathcal{M},\Lambda\}$, a {general algorithm} is a map $\Gamma:\Omega\to \mathcal{M}$ such that $\forall A\in\Omega$,
\begin{itemize}
\item[1.] There exists a non-empty finite subset of evaluations $\Lambda_\Gamma(A) \subset\Lambda$;
\item[2.] The action of $\Gamma$ on $A$ only depends on $\{f(A)\}_{f \in \Lambda_\Gamma(A)}$;
\item[3.] If $B\in\Omega$ with $f(A)=f(B)$ for every $f\in\Lambda_\Gamma(A)$, then $\Lambda_\Gamma(A)=\Lambda_\Gamma(B)$ and $\Gamma(A)=\Gamma(B)$.
\end{itemize}
\end{definition}

To capture multi‑limit procedures we recall:

\begin{definition}[Tower of algorithms {\cite[Definition A.4]{colbrook2024limits}}]{\label{def:tower_funct}}
Let $k\in\mathbb{N}$.
A tower of algorithms of height $k$ for a computational problem $\{\Xi,\Omega,\mathcal{M},\Lambda\}$ is a collection of functions
$$
\Gamma_{n_k},\,
\Gamma_{n_k, n_{k-1}},\, \ldots,\,\Gamma_{n_k, \ldots, n_1}:\Omega \rightarrow \mathcal{M}, \quad n_k,\ldots,n_1 \in \mathbb{N},
$$
where $\{\Gamma_{n_k, \ldots, n_1}\}$ are general algorithms (\cref{def:Gen_alg}) and for every $A \in \Omega$, the following convergence holds in $(\mathcal{M},d)$:
\begin{equation*}
\begin{split}
&\Xi(A) = \lim_{n_k \rightarrow \infty} \Gamma_{n_k}(A), \quad\Gamma_{n_k}(A) =
  \lim_{n_{k-1} \rightarrow \infty} \Gamma_{n_k, n_{k-1}}(A),\quad\ldots,\\
&\Gamma_{n_k, \ldots, n_2}(A) =
  \lim_{n_1 \rightarrow \infty} \Gamma_{n_k, \ldots, n_1}(A).
\end{split}
\end{equation*}
\end{definition}

When establishing upper bounds for the SCI, we can determine the type of tower by setting constraints on the functions $\{\Gamma_{n_k, \ldots, n_1}\}$ at the base level. Axiomatically this gives us two possible types, as also analyzed in e.g. \cite{colbrook2024limits}:
\begin{itemize}
\item A \textit{general tower}, denoted by $\alpha=G$, refers to \cref{def:tower_funct} with no further restrictions.
\item An \textit{arithmetic tower}, denoted by $\alpha=A$, refers to \cref{def:tower_funct} where each $\Gamma_{n_k, \ldots, n_1}(B)$ can be computed using $\Lambda$ and finitely many arithmetic operations and comparisons. More precisely, if $\Lambda$ is countable, each output $\Gamma_{n_k, \ldots, n_1}(B)$ is a finite string of numbers (or encoding) that can be identified with an element in $\mathcal{M}$, and the following function is recursive:
$$
(n_k, \ldots, n_1,\{f(B)\}_{f \in \Lambda})\mapsto \Gamma_{n_k, \ldots, n_1}(B)
$$

\end{itemize}

\begin{definition}[Solvability Complexity Index {\cite[Definition A.6]{colbrook2024limits}}]
A computational problem $\{\Xi,\Omega,\mathcal{M},\Lambda\}$ has Solvability Complexity Index $k\in\mathbb{N}$ with respect to type $\alpha$, written $\mathrm{SCI}(\Xi,\Omega,\mathcal{M},\Lambda)_{\alpha} = k$, if $k$ is the smallest integer for which there exists a tower of algorithms of type $\alpha$ and height $k$ for $\Xi$. If no such tower exists, then $\mathrm{SCI}(\Xi,\Omega,\mathcal{M},\Lambda)_{\alpha} = \infty.$ If there exists an algorithm $\Gamma$ of type $\alpha$ with $\Xi = \Gamma$, then $\mathrm{SCI}(\Xi,\Omega,\mathcal{M},\Lambda)_{\alpha} = 0$.
\end{definition}

We use the same \(\Delta_1\)-information convention as in \cite{colbrook2024limits}: exact oracle values may be replaced by approximants with certified error \(2^{-n}\).

The SCI induces a hierarchy in natural manner as follows.

\begin{definition}[The SCI Hierarchy {\cite[Definition 2.6]{ben2022universal}}]
The SCI Hierarchy is a hierarchy $\{\Delta_k\}_{k \in \mathbb{N}_0}$ of classes of computational problems $(\Omega,\Lambda,\Xi,\mathcal{M})$, where each $\Delta_k$ is defined as the collection of all computational problems satisfying
$$
\begin{aligned}
    &(\Omega,\Lambda,\Xi,\mathcal{M}) \in \Delta_0 \quad \Leftrightarrow \quad \text{SCI}(\Omega,\Lambda,\Xi,\mathcal{M}) = 0,\\
    &(\Omega,\Lambda,\Xi,\mathcal{M}) \in \Delta_{k+1} \quad \Leftrightarrow \quad \text{SCI}(\Omega,\Lambda,\Xi,\mathcal{M}) \leq k,\\
\end{aligned}
$$
where $k \in \mathbb{N}$. Here, the class $\Delta_1$ is a special case, defined as the class of all computational problems in $\Delta_2$ with a convergence rate, i.e.
\[
(\Omega,\Lambda,\Xi,\mathcal{M}) \in \Delta_1 \quad \Leftrightarrow \quad (\exists \{\Gamma_n\}_{n \in \mathbb{N}}) (\exists \varepsilon_n \downarrow 0), \text{ s.t. } \forall B \in \Omega \, : \, d(\Gamma_n(B),\Xi(B)) \leq \varepsilon_n.
\]
\end{definition}
\subsubsection{Refinements That Capture Error Control}
Sufficient structure in $(\mathcal{M},d)$ enables two types of verification or error control: convergence from above and below. An important case used in this work is the Hausdorff metric space, $(\mathcal{M}_H,d_H)$, that is suitable for computing spectra of bounded operators. It is the collection of non-empty compact subsets of $\mathbb{C}$ equipped with the Hausdorff metric:
\begin{equation}\label{eq:Hausdorff}
d_{H}(X,Y) = \max\left\{\sup_{x \in X} \inf_{y \in Y} |x-y|, \sup_{y \in Y} \inf_{x \in X} |x-y| \right\},\quad X,Y\in\mathcal{M}_H.
\end{equation}
We are interested in the Hausdorff metric since convergence to the spectrum in this metric means that our algorithms converge without spectral pollution or spectral invisibility, see \cite{Pokrzywa79} for a proof.

\begin{definition}[$\Sigma$ and $\Pi$ classes for Hausdorff metric {\cite[Definition A.11]{colbrook2024limits}}]
\label{def:sig_pi_spec}
Consider a collection $\mathcal{C}$ of computational problems and let $\mathcal{T}_\alpha$ be the collection of all towers of algorithms of type $\alpha$. Suppose that $(\mathcal{M},d)$ is the Hausdorff metric. We set $\Sigma^{\alpha}_0 = \Pi^{\alpha}_0 = \Delta^{\alpha}_0$ and for $m \in \mathbb{N}$, we define
\begin{equation*}
\begin{split}
\Sigma^{\alpha}_{m}&= \Big\{\{\Xi,\Omega,\mathcal{M},\Lambda\} \in \Delta_{m+1}^\alpha  :  \exists \{\Gamma_{n_{m},\ldots,n_1}\} \in \mathcal{T}_\alpha,  \{X_{n_{m}}(A)\}\subset\mathcal{M} \text{ s.t. }\forall A \in\Omega\\
&\quad\quad\quad\quad\quad\quad\Gamma_{n_{m}}(A)  \subset X_{n_{m}}(A),\lim_{n_{m}\rightarrow\infty}\Gamma_{n_{m}}(A)=\Xi(A), d(X_{n_{m}}(A),\Xi(A))\leq 2^{-n_{m}}\Big\},\\
\Pi^{\alpha}_{m} &= \Big\{\{\Xi,\Omega,\mathcal{M},\Lambda\} \in \Delta_{m+1}^\alpha  :  \exists \{\Gamma_{n_{m},\ldots,n_1}\} \in \mathcal{T}_\alpha,  \{X_{n_{m}}(A)\}\subset\mathcal{M} \text{ s.t. }\forall A \in \Omega\\
&\quad\quad\quad\quad\quad\quad\Xi(A)  \subset X_{n_{m}}(A),\lim_{n_{m}\rightarrow\infty}\Gamma_{n_{m}}(A)=\Xi(A),
 d(X_{n_{m}}(A),\Gamma_{n_{m}}(A))\leq 2^{-n_{m}} \Big\}.
\end{split}
\end{equation*}
\end{definition}

The distinction between \(\Sigma_m^G\) and \(\Pi_m^G\) is important below.
The regularized target \(R_{\mathrm{ap},\varepsilon}\) is computed by inner
sublevel tests and therefore naturally belongs to \(\Sigma\)-classes.  The
closed target \(C_{\mathrm{ap},\varepsilon}\) is a closed lower-norm sublevel set;
outside the measure-preserving case it is obtained from the regularized targets
by an additional outer intersection, and therefore naturally belongs to a
\(\Pi\)-class.

\section{Koopman Framework On \texorpdfstring{$L^{p}(\mathcal X,\omega)$}{Lp(X,w)}}\label{sec:KoopLp}

Throughout this section, $(\mathcal X,d_{\mathcal X})$ is a compact metric space, \(\omega\) is a finite Borel measure on \(\mathcal X\), and
\(1<p<\infty\).  We assume \(\omega(X)>0\), since otherwise the problem is trivial, and write as usual
\[
        \|f\|_p:=\|f\|_{L^p(\mathcal X,\omega)}.
\]

\subsection{Boundedness Of The Koopman Operator}\label{sec:KoopmanLpBound}

Given a measurable map $F:\mathcal X\to\mathcal X$, define
\[
  \mathcal K_F : L^p(\mathcal X,\omega) \to L^p(\mathcal X,\omega), \quad
  (\mathcal K_F f)(x) := f\bigl(F(x)\bigr),\;\;f\in L^p(\mathcal X,\omega).
\]
It is a bounded operator on $L^p(\mathcal X,\omega)$ precisely under the usual Radon-Nikodym boundedness condition.

\begin{proposition}[Boundedness criteria]\label{prop:boundednessLp}
Let $F : \mathcal X \to \mathcal X$ be measurable and nonsingular with respect to $\omega$ (i.e.\ $F_{\#}\omega\ll\omega$) and denote 
\(
  \rho_F(x) := \dfrac{dF_{\#}\omega}{d\omega}(x).
\)
Then
\[
  \|\mathcal K_F\|_{L^p \to L^p} = \bigl\|\rho_{F}\bigr\|_{L^{\infty}}^{\,1/p}.
\]
Consequently
\begin{enumerate}
  \item[i)] If $F$ is \textit{measure‑preserving} ($\rho_{F}=1$ a.e.), then $\mathcal K_{F}$ is an \textit{isometry} on $L^{p}(\mathcal{X},\omega)$.
  \item[ii)] If $\rho_{F}\in L^{\infty}(\mathcal{X},\omega)$, then $\mathcal K_{F}$ is bounded with norm $\le\|\rho_{F}\|_{\infty}^{1/p}$.
  \item[iii)] If $\rho_{F}\notin L^{\infty}(\mathcal{X},\omega)$, $\mathcal K_{F}$ is \textit{unbounded}.
\end{enumerate}
\end{proposition}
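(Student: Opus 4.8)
The plan is to push everything through the change-of-variables formula for the pushforward measure and then read off the operator norm from the definition of the essential supremum. First I would observe that nonsingularity is exactly the hypothesis needed for $\mathcal{K}_F$ to be well-defined on $L^p$-equivalence classes: if $f=g$ $\omega$-a.e.\ with exceptional set $N$, then $\{x:f(F(x))\neq g(F(x))\}=F^{-1}(N)$ has $\omega$-measure $(F_{\#}\omega)(N)$, which vanishes since $F_{\#}\omega\ll\omega$ and $\omega(N)=0$. Next, the standard pushforward identity (checked on indicators, extended to simple functions by linearity and to nonnegative measurable functions by monotone convergence) together with Radon--Nikodym gives, for every $f\in L^{p}(\mathcal{X},\omega)$,
\[
  \|\mathcal{K}_F f\|_p^{p}
  =\int_{\mathcal{X}}|f\circ F|^{p}\,\mathrm{d}\omega
  =\int_{\mathcal{X}}|f|^{p}\,\mathrm{d}(F_{\#}\omega)
  =\int_{\mathcal{X}}|f|^{p}\,\rho_F\,\mathrm{d}\omega .
\]

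From this identity both inequalities are immediate. For the upper bound, $\rho_F\le\|\rho_F\|_{L^{\infty}}$ $\omega$-a.e.\ yields $\|\mathcal{K}_F f\|_p^{p}\le\|\rho_F\|_{L^{\infty}}\|f\|_p^{p}$, so $\mathcal{K}_F$ maps $L^{p}$ into itself with $\|\mathcal{K}_F\|_{L^{p}\to L^{p}}\le\|\rho_F\|_{L^{\infty}}^{1/p}$. For sharpness, given $\delta\in(0,\|\rho_F\|_{L^{\infty}})$ set $A_\delta:=\{x:\rho_F(x)>\|\rho_F\|_{L^{\infty}}-\delta\}$; by definition of the essential supremum $\omega(A_\delta)>0$, and since $\omega$ is finite the indicator $\mathbf{1}_{A_\delta}$ lies in $L^{p}$ with $\|\mathbf{1}_{A_\delta}\|_p^{p}=\omega(A_\delta)\in(0,\infty)$. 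The identity above gives $\|\mathcal{K}_F\mathbf{1}_{A_\delta}\|_p^{p}=\int_{A_\delta}\rho_F\,\mathrm{d}\omega\ge(\|\rho_F\|_{L^{\infty}}-\delta)\,\omega(A_\delta)$, hence $\|\mathcal{K}_F\mathbf{1}_{A_\delta}\|_p/\|\mathbf{1}_{A_\delta}\|_p\ge(\|\rho_F\|_{L^{\infty}}-\delta)^{1/p}$; letting $\delta\downarrow0$ completes the equality. Item (1) then follows by specializing $\rho_F=1$ a.e.\ (using $\omega(\mathcal{X})\neq0$, so that $\|1\|_{L^{\infty}}=1$), and item (2) is a restatement of the equality.

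For item (3), when $\rho_F\notin L^{\infty}(\mathcal{X},\omega)$ I would run the same construction with the superlevel sets $A_M:=\{x:\rho_F(x)>M\}$, $M\in\mathbb{N}$: unboundedness forces $\omega(A_M)>0$ for all $M$, and the normalized indicators $g_M:=\omega(A_M)^{-1/p}\mathbf{1}_{A_M}$ satisfy $\|g_M\|_p=1$ while $\|\mathcal{K}_F g_M\|_p^{p}=\omega(A_M)^{-1}\int_{A_M}\rho_F\,\mathrm{d}\omega\ge M$, so no finite constant controls $\mathcal{K}_F$; a disjointification argument (summing suitably rescaled indicators of a sequence of such sets) even produces a single $f\in L^{p}$ with $f\circ F\notin L^{p}$.

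I do not expect a genuine analytic obstacle here. The only point requiring care is the measure-theoretic bookkeeping: fixing the convention $(F_{\#}\omega)(\cdot)=\omega(F^{-1}(\cdot))$ so that the change-of-variables identity holds in the stated direction, and verifying that $\mathcal{K}_F$ descends to $L^{p}$-classes, which is precisely where nonsingularity is used. Once that identity is in hand, the proposition is nothing more than the definition of $\|\cdot\|_{L^{\infty}}$.
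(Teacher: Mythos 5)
Your proof is correct and follows essentially the same route as the paper: the pushforward change-of-variables identity gives the upper bound, and testing indicators on near-essential-supremum sets gives equality. The only cosmetic difference is your test function $\mathbf{1}_{A_\delta}$ in place of the paper's $\rho_F^{1/p}\mathbf{1}_E$; you also spell out the well-definedness on $L^p$-classes and the unbounded case (3), which the paper leaves implicit.
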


\begin{proof}
This result is very classical, therefore we repeat just the main idea of the proof. For \(f\in L^p(\mathcal X,\omega)\),
\[
\|\mathcal K_F f \|_p^p = \int_X |f\circ F|^p\,d\omega = \int_X |f|^p\,dF_\#\omega = \int_X |f|^p\rho_F\,d\omega .
\]
Thus
\[
\|\mathcal K_F f \|_p^p\le \|\rho_F\|_\infty\|f\|_p^p .
\]
Conversely, for every \(\eta>0\) choose a measurable set \(E\) of positive measure such that
\[
\rho_F\ge \|\rho_F\|_\infty-\eta \quad\text{on }E.
\]
Testing with $f=\omega(E)^{-1/p} \mathbf 1_E$ gives
\[
\|\mathcal K_Ff\|_p^p = \omega(E)^{-1}\int_E\rho_F\,d\omega \ge \|\rho_F\|_\infty-\eta .
\]
Letting \(\eta\downarrow0\) proves
\[
\|\mathcal K_F\|_{L^p\to L^p}=\|\rho_F\|_\infty^{1/p}.
\]

If $\rho_F\notin L^\infty$, then for every $M>0$ there exists a measurable set $E_M$ of positive measure such that
\[
   \rho_F\ge M^p \quad\text{on }E_M .
\]
Testing with $f_M=\omega(E_M)^{-1/p}\mathbf 1_{E_M}$ gives
\[
   \|f_M\|_p=1,\quad \|\K_F f_M\|_p^p = \omega(E_M)^{-1} \int_{E_M}\rho_F\,d\omega \ge M^p .
\]
Hence $\|\K_F f_M\|_p\ge M$ for every $M>0$, so $\K_F$ is unbounded.
\end{proof}

In the following by a modulus of continuity we mean a function $\alpha:[0,\infty)\to[0,\infty)$ such that
\[
        \alpha(0)=0,\, \lim_{t\downarrow 0}\alpha(t)=0,
\]
and, after replacing \(\alpha\) by its monotone envelope if necessary, we assume \(\alpha\) is nondecreasing.

\begin{definition}[Input classes]\label{def:input-classes}
Let
\[
\Omega_\X := \left\{ F: \X \to \X: F\text{ is continuous},\ F_\#\omega\ll\omega,\ \frac{dF_\#\omega}{d\omega}\in L^\infty(\X,\omega)\right\}.
\]
For a prescribed modulus of continuity \(\alpha\), define
\[
\Omega_\X^\alpha := \left\{ F\in\Omega_\X: d_\X(Fx,Fy)\le \alpha(d_\X(x,y))\ \forall x,y\in \X \right\}.
\]
The measure-preserving classes are
\[
\Omega_\X^m := \{F: \X \to \X : F\text{ is continuous and }F_\#\omega=\omega\},
\]
and
\[
\Omega_\X^{\alpha,m}:=\Omega_\X^\alpha\cap\Omega_\X^m .
\]
\end{definition}

For the corresponding computational problems below we will use the point-evaluation representation of $F$. Formally, to keep the evaluation set complex-valued as required in \cref{def:comp_prob}, fix a countable family $\mathcal A \subset C(X,\mathbb{C})$ separating points of $\X$ and containing all dictionary functions used below. We take then
\[
\Lambda_\X := \{\lambda_{x,\varphi} : F \mapsto \varphi(F(x)) : x \in \X, \, \varphi \in \mathcal A \}.
\]
Thus a finite point query at $x$ is implemented by finitely many complex queries $\lambda_{x,\varphi}$. In the residual algorithms below, only values of the form
\[
\psi_{n,j}(F(x_{m,P})) = \lambda_{x_{m,P},\psi_{n,j}}(F)
\]
are used. Further note that for \(F\in\Omega_\X\), \cref{prop:boundednessLp} implies that \(\K_F\) is a bounded operator on \(L^p(\X,\omega)\). If \(F\in\Omega_\X^m\), then \(\K_F\) is an \(L^p\)-isometry.

\subsection{Approximate Point Spectral Sets}

The fixed-\(\varepsilon\) problem requires a distinction that is invisible in some Hilbert-space settings. The strict lower-norm sublevel set is open, whereas our output space \(\mathcal M_H\) consists of non-empty compact subsets of \(\mathbb C\). We therefore separate the compact regularization of the strict sublevel set from the closed lower-norm sublevel set.

\begin{definition}[Approximate point spectral sets]\label{def:ap-targets}
Let \(T\in\mathcal B(\Y)\), where \(\Y\) is a complex Banach space. Define
\[
        \nu_T(z):=\sigma_{\inf}(T-zI) := \inf_{\|u\|_\Y=1}\|(T-zI)u\|_\Y .
\]
The approximate point spectrum is
\[
        \sigma_{\mathrm{ap}}(T) := \{z\in\mathbb C:\nu_T(z)=0\}.
\]
For \(\varepsilon>0\), define the regularized approximate point \(\varepsilon\)-pseudospectrum by
\[
        R_{\mathrm{ap},\varepsilon}(T) := \overline{\{z\in\mathbb C:\nu_T(z)<\varepsilon\}},
\]
and the closed approximate point \(\varepsilon\)-pseudospectrum by
\[
        C_{\mathrm{ap},\varepsilon}(T) := \{z\in\mathbb C:\nu_T(z)\le\varepsilon\}.
\]
\end{definition}

\begin{lemma}[Basic properties of the lower-norm targets]\label{lem:basic-lower-norm-relations}
Let \(T \in \mathcal B(\Y)\). Then the following hold.
\begin{enumerate}
\item[i)] The map \(z\mapsto\nu_T(z)\) is \(1\)-Lipschitz.
\item[ii)] The closed target admits the sequence characterization
\[
        C_{\mathrm{ap},\varepsilon}(T) = \left\{ z\in\mathbb C: \exists (u_n)\subset \Y,\ \|u_n\|_\Y=1,\ \limsup_{n\to\infty}\|(T-zI)u_n\|_\Y\le\varepsilon \right\}.
\]

\item[iii)] One has
\[
        R_{\mathrm{ap},\varepsilon}(T) \subseteq C_{\mathrm{ap},\varepsilon}(T).
\]

\item[iv)]
\[
        C_{\mathrm{ap},\varepsilon}(T) = \bigcap_{k=1}^{\infty} R_{\mathrm{ap},\varepsilon+2^{-k}}(T).
\]

\item[v)]
\[
        \sigma_{\mathrm{ap}}(T) = \bigcap_{k=1}^{\infty}R_{\mathrm{ap},2^{-k}}(T) = \bigcap_{k=1}^{\infty}C_{\mathrm{ap},2^{-k}}(T).
\]

\item[vi)] If \(\varepsilon>0\), then \(R_{\mathrm{ap},\varepsilon}(T)\), \(C_{\mathrm{ap},\varepsilon}(T)\), and \(\sigma_{\mathrm{ap}}(T)\) are compact non-empty subsets of \(\mathbb C\).
\end{enumerate}
\end{lemma}

\begin{proof}
For \(z,w\in\mathbb C\) and \(\|u\|_\Y=1\),
\[
        \|(T-zI)u\|_\Y \le \|(T-wI)u\|_\Y +|z-w|.
\]
Taking the infimum over all unit vectors \(u\) gives
\[
        \nu_T(z)\le \nu_T(w)+|z-w|.
\]
Interchanging \(z\) and \(w\) proves the Lipschitz estimate.

For the sequence characterization, suppose first that \(z\in C_{\mathrm{ap},\varepsilon}(T)\). Then \(\nu_T(z)\le\varepsilon\). For each \(n\in\mathbb N\), choose \(u_n\in \Y\) with \(\|u_n\|=1\) and
\[
        \|(T-zI)u_n\|_\Y \le \nu_T(z)+\frac1n.
\]
Then
\[
        \limsup_{n\to\infty}\|(T-zI)u_n\|_\Y \le \nu_T(z) \le \varepsilon.
\]
Conversely, if such a sequence exists, then
\[
        \nu_T(z) \le \|(T-zI)u_n\|_\Y,
\]
and hence
\[
        \nu_T(z) \le \limsup_{n\to\infty}\|(T-zI)u_n\|_\Y \le \varepsilon.
\]
Thus \(z\in C_{\mathrm{ap},\varepsilon}(T)\).

The inclusion
\[
        R_{\mathrm{ap},\varepsilon}(T) \subseteq C_{\mathrm{ap},\varepsilon}(T)
\]
follows from continuity of \(\nu_T\).

We prove now the intersection identity. If \(z\in C_{\mathrm{ap},\varepsilon}(T)\), then
\[
        \nu_T(z)\le \varepsilon<\varepsilon+2^{-k}
\]
for every \(k\), i.e.
\[
        z\in \{w:\nu_T(w)<\varepsilon+2^{-k}\} \subseteq R_{\mathrm{ap},\varepsilon+2^{-k}}(T)
\]
for every \(k\). Conversely, suppose
\[
        z\in\bigcap_{k=1}^{\infty}R_{\mathrm{ap},\varepsilon+2^{-k}}(T).
\]
Since
\[
        R_{\mathrm{ap},\varepsilon+2^{-k}}(T) \subseteq \{w:\nu_T(w)\le \varepsilon+2^{-k}\},
\]
we get
\[
        \nu_T(z)\le \varepsilon+2^{-k}.
\]
Letting \(k\to\infty\) gives \(\nu_T(z)\le\varepsilon\), hence \(z\in C_{\mathrm{ap},\varepsilon}(T)\).

The identities for \(\sigma_{\mathrm{ap}}(T)\) follow by applying the previous argument to the zero level
\[
        \sigma_{\mathrm{ap}}(T)=\{z:\nu_T(z)=0\}.
\]

Finally,
\[
        \nu_T(z) = \inf_{\|u\|=1}\|(T-zI)u\| \ge \big||z|-\|T\|\big| \ge |z|-\|T\|.
\]
Thus every set \(\{z:\nu_T(z)\le\varepsilon\}\) is contained in \(\overline B(0,\|T\|+\varepsilon)\). Since \(\nu_T\) is continuous, the closed sublevel set \(C_{\mathrm{ap},\varepsilon}(T)\) is compact, and
\(R_{\mathrm{ap},\varepsilon}(T)\subseteq C_{\mathrm{ap},\varepsilon}(T)\) is closed by definition, hence compact. The approximate point spectrum is compact because it is the closed zero set of \(\nu_T\), contained in \(\sigma(T)\).

Non-emptiness follows from the standard spectral fact
\[
        \partial\sigma(T)\subseteq\sigma_{\mathrm{ap}}(T),
\]
for bounded operators on complex Banach spaces; see, e.g., \cite[Theorem~1.1.7(iii)]{Bourhim2000}. Since the spectrum of a bounded operator on a non-zero complex Banach space is non-empty and compact, its boundary is non-empty. Hence \(\sigma_{\mathrm{ap}}(T)\neq\emptyset\), and therefore also \(R_{\mathrm{ap},\varepsilon}(T)\) and \(C_{\mathrm{ap},\varepsilon}(T)\) are non-empty for \(\varepsilon>0\).
\end{proof}

The inclusion
\[
        R_{\mathrm{ap},\varepsilon}(T) \subseteq C_{\mathrm{ap},\varepsilon}(T)
\]
can be strict for bounded operators. Thus the two fixed-\(\varepsilon\) targets are not merely notational variants. On the other hand, the next result shows that this distinction disappears for measure-preserving Koopman operators.

\begin{remark}[Isometric but not necessarily invertible]
If \(F_\#\omega=\omega\), then \(\mathcal K_F\) is an isometry on \(L^p(\X,\omega)\). It need not be surjective unless \(F\) is invertible modulo \(\omega\). Thus both surjective and non-surjective isometries occur naturally for Koopman operators. The proof of \cref{thm:isometry-closed-regularized} treats these two cases separately.
\end{remark}

\begin{theorem}[Closed and regularized fixed-\(\varepsilon\) targets for \(L^p\)-isometries]\label{thm:isometry-closed-regularized}
Let \(1<p<\infty\), and let \(V\) be an isometry on a complex \(L^p\)-space. Then, for every \(\varepsilon>0\),
\[
        C_{\mathrm{ap},\varepsilon}(V) = R_{\mathrm{ap},\varepsilon}(V).
\]
Consequently, if \(F_\#\omega=\omega\), then
\[
        C_{\mathrm{ap},\varepsilon}(\K_F) = R_{\mathrm{ap},\varepsilon}(\K_F).
\]
\end{theorem}

\begin{proof}
First suppose that \(V\) is not surjective. By the standard spectral theorem for Banach-space isometries, see, e.g., \cite[Lemma~1.1]{IlisevicTurnsekIsometries} we get $\sigma(V)=\overline{\mathbb D}$. Moreover,
\[
        \partial\sigma(V)\subseteq\sigma_{\mathrm{ap}}(V),
\]
therefore
\[
        \mathbb T\subseteq\sigma_{\mathrm{ap}}(V).
\]
For every unit vector \(u\) we have
\[
        \|(V-zI)u\| \ge \bigl|\|Vu\|-|z|\|u\|\bigr| = |1-|z||.
\]
Taking the infimum over all unit vectors gives then
\[
        \nu_V(z)\ge |1-|z||.
\]
Conversely, if \(z\neq0\), put
\[
        \zeta:=\frac{z}{|z|}\in\mathbb T.
\]
Since \(\zeta\in\sigma_{\mathrm{ap}}(V)\), there are unit vectors \(u_n\) such that
\[
        \|(V-\zeta I)u_n\|\to0.
\]
Therefore
\[
        \|(V-zI)u_n\| \le \|(V-\zeta I)u_n\|+|\zeta-z|.
\]
Taking \(n\to\infty\) gives
\[
        \nu_V(z)\le |\zeta-z|=|1-|z||.
\]
For \(z=0\), the equality is immediate from \(\|Vu\|=\|u\|\). Hence
\[
        \nu_V(z)=\operatorname{dist}(z,\mathbb T),
\]
where $z\in\mathbb C$. In other words,
\[
        C_{\mathrm{ap},\varepsilon}(V) = R_{\mathrm{ap},\varepsilon}(V) = \{z:\operatorname{dist}(z,\mathbb T)\le\varepsilon\}.
\]

Assume now that \(V\) is surjective. Then \(V^{-1}\) is also an isometry. The standard Neumann-series argument gives
\[
        \sigma(V)\subseteq\mathbb T.
\]
Indeed, if \(|z|>1\), then \(zI-V\) is invertible by
\[
        (zI-V)^{-1} = z^{-1}\sum_{n=0}^{\infty}z^{-n}V^n,
\]
and if \(|z|<1\), then
\[
        V-zI= V(I-zV^{-1}),
\]
where \(I-zV^{-1}\) is invertible by the Neumann-series. Thus no spectral point lies outside \(\mathbb T\).

Every spectral point of \(V\) is therefore a boundary spectral point, hence belongs to \(\sigma_{\mathrm{ap}}(V)\). Thus
\[
        \nu_V(z)=0
\]
for $z\in\sigma(V)$. Let \(z\in\rho(V)\), the resolvent set of \(V\). Since \(V-zI\) is invertible,
\[
        \nu_V(z) = \|(V-zI)^{-1}\|^{-1}.
\]
Suppose now, for contradiction, that
\[
        C_{\mathrm{ap},\varepsilon}(V) \neq R_{\mathrm{ap},\varepsilon}(V).
\]
Then there is a
\[
        z_0\in C_{\mathrm{ap},\varepsilon}(V) \setminus R_{\mathrm{ap},\varepsilon}(V).
\]
Since \(R_{\mathrm{ap},\varepsilon}(V)\) is the closure of \(\{\nu_V<\varepsilon\}\), this implies
\[
        \nu_V(z_0)=\varepsilon
\]
and there exists an open neighbourhood \(U\) of \(z_0\) such that
\[
        \nu_V(z)\ge\varepsilon
\]
for $z\in U$. Because \(\varepsilon>0\), the point \(z_0\) cannot lie in \(\sigma(V)\); hence, after shrinking \(U\) if necessary, we may assume
\[
        U\subset\rho(V).
\]
On \(U\), the resolvent norm satisfies
\[
        \|(V-zI)^{-1}\| = \nu_V(z)^{-1} \le \varepsilon^{-1} = \|(V-z_0I)^{-1}\|.
\]
Thus the resolvent norm has a local maximum at \(z_0\).

The function
\[
        z\mapsto (V-zI)^{-1}
\]
is operator-valued holomorphic on \(\rho(V)\), and the norm of a Banach-valued holomorphic function is subharmonic. Since $L^p(\mathcal X,\omega)$ is uniformly convex (and therefore also complex uniformly convex) for $1<p<\infty$, we have by the strong maximum property for resolvents in \cite[Theorem~1.2]{ChandlerWildeChonchaiyaLindner2012}
\[
        \|(V-zI)^{-1}\|<\varepsilon^{-1} \qquad \forall z\in U
\]
so in particular for $z=z_0$. But \(\nu_V(z_0)=\varepsilon\), and since \(z_0\in\rho(V)\),
\[
        \|(V-z_0I)^{-1}\| = \nu_V(z_0)^{-1} = \varepsilon^{-1}
\]
a contradiction. Thus no such \(z_0\) exists, and
\[
        C_{\mathrm{ap},\varepsilon}(V) = R_{\mathrm{ap},\varepsilon}(V).
\]

Finally, if \(F_\#\omega=\omega\), then \cref{prop:boundednessLp} shows that \(\K_F\) is an isometry on \(L^p(\X,\omega)\). Applying the preceding result to \(V=\K_F\) proves the claim.
\end{proof}

\section{Residual Towers In \texorpdfstring{$L^{p}(\mathcal{X},\omega)$}{Lp(X,w)}}\label{sec:residual-towers}

The upper-bound construction uses actual residuals
\[
        \|(\K_F-zI)g\|_p
\]
on finite-dimensional trial spaces. The trial functions are continuous, so for continuous \(F\) the integrand
\[
        x\mapsto |g(F(x))-zg(x)|^p
\]
is continuous and can be approximated by tagged quadrature.

\subsection{Finite-Dimensional Residuals}\label{sec:finite_residual}

\begin{lemma}[Dense independent Lipschitz dictionary]\label{lem:dense-lip-dictionary}
There exists a nondecreasing sequence of finite-dimensional subspaces
\[
        V_1\subseteq V_2\subseteq\cdots\subset C(\X)\cap\Lip(\X)
\]
such that
\[
        \overline{\bigcup_{n\ge1}V_n}^{\,L^p(\X,\omega)} = L^p(\X,\omega).
\]
Moreover, each \(V_n\) may be equipped with a fixed basis
\[
        \psi_{n,1},\ldots,\psi_{n,d_n}.
\]
If \(L^p(\X,\omega)\) is infinite-dimensional, the \(V_n\) may be chosen as
\[
        V_n=\operatorname{span}\{\psi_1,\ldots,\psi_n\}
\]
for a countably infinite linearly independent family
\[
        (\psi_j)_{j\ge1}\subset C(\X)\cap\Lip(\X).
\]
\end{lemma}

\begin{proof}
First, \(C(\X)\) is dense in \(L^p(\X,\omega)\), because \(\omega\) is a finite Borel measure on the compact metric space \(\X\).

Second, Lipschitz functions are uniformly dense in \(C(\X)\). A possible proof of this is the standard McShane-Whitney/Lipschitz-envelope approximation, see, e.g., \cite[Theorem~1.33]{weaverLip}.

Since \(\X\) is compact metric, \(C(\X)\) is separable in the supremum norm. By the preceding Lipschitz-density argument, \(C(\X)\cap\Lip(\X)\) is uniformly dense in \(C(\X)\), and therefore also dense in \(L^p(\X,\omega)\). Choose a sequence
\[
        (\varphi_j)_{j\ge1}\subset C(\X)\cap\Lip(\X)
\]
whose complex linear span is dense in \(L^p(\X,\omega)\). We now extract a linearly independent subsequence without changing the algebraic span generated at any finite stage. Define finite ordered sets
\[
        S_j\subset\{\varphi_1,\ldots,\varphi_j\}
\]
for $j\ge 0$ inductively as follows. Set $S_0:=\emptyset$. Assume \(S_{j-1}\) has been defined. If
\[
        \varphi_j\in\operatorname{span}_{\mathbb C} S_{j-1},
\]
set $S_j:=S_{j-1}$. If
\[
        \varphi_j\notin\operatorname{span}_{\mathbb C} S_{j-1},
\]
set $S_j:=S_{j-1}\cup\{\varphi_j\}$, where \(\varphi_j\) is appended at the end of the ordered list.

We claim now that for every \(j\ge 0\),
\[
        \operatorname{span}_{\mathbb C} S_j = \operatorname{span}_{\mathbb C}\{\varphi_1,\ldots,\varphi_j\}.
\]
The claim is obviously true for \(j=0\). Suppose it holds for \(j-1\). If
\[
        \varphi_j\in\operatorname{span}_{\mathbb C}S_{j-1},
\]
then
\[
\begin{aligned}
        \operatorname{span}_{\mathbb C}\{\varphi_1,\ldots,\varphi_j\}
        &=\operatorname{span}_{\mathbb C}\{\varphi_1,\ldots,\varphi_{j-1}\} \\
        &=\operatorname{span}_{\mathbb C}S_{j-1}=
        \operatorname{span}_{\mathbb C}S_j .
\end{aligned}
\]
If
\[
        \varphi_j\notin\operatorname{span}_{\mathbb C}S_{j-1},
\]
then
\[
\begin{aligned}
        \operatorname{span}_{\mathbb C}S_j
        &=\operatorname{span}_{\mathbb C}\bigl(S_{j-1}\cup\{\varphi_j\}\bigr) \\
        &=\operatorname{span}_{\mathbb C}\{\varphi_1,\ldots,\varphi_{j-1},\varphi_j\},
\end{aligned}
\]
again by the induction hypothesis. This proves the claim.

Now let $(\psi_k)_{k\in I}$ be the retained functions, in their order of retention, where either $I=\{1,\ldots,N\}$ for some \(N\in\mathbb N\), or $I=\mathbb N$. By construction, \((\psi_k)_{k\in I}\) is linearly independent. Moreover,
\[
        \operatorname{span}_{\mathbb C}\{\psi_k:k\in I\} =
        \operatorname{span}_{\mathbb C}\{\varphi_j:j\ge1\}.
\]
Thus the two algebraic spans coincide. Since the span of \((\varphi_j)\) is dense in \(L^p(\X,\omega)\), the span of the retained family \((\psi_k)\) is also dense in \(L^p(\X,\omega)\).

If \(I=\mathbb N\), set
\[
        V_n:=\operatorname{span}_{\mathbb C}\{\psi_1,\ldots,\psi_n\}.
\]
If \(I=\{1,\ldots,N\}\) is finite, set
\[
        V_n:= \operatorname{span}_{\mathbb C}\{\psi_1,\ldots,\psi_{\min(n,N)}\}.
\]
In both cases,
\[
        V_1\subseteq V_2\subseteq\cdots
\]
and
\[
        \overline{\bigcup_{n\ge1}V_n}^{\,L^p(\X,\omega)} = L^p(\X,\omega).
\]
W.l.o.g. we choose the dictionary so that $\psi_1 \equiv 1$ (consequently $\mathbf{1}_\X \in V_n$ for every $n$.).
\end{proof}

For notational simplicity, fix for each $n$ a basis $\psi_{n,1},\ldots,\psi_{n,d_n}$ of $V_n$, where $d_n = \dim V_n$, and write
\[
        \Psi_n c:=\sum_{j=1}^{d_n} c_j\psi_{n,j},
\]
where $c \in \mathbb{C}^{d_n}$. Since $\mathbf 1_{\mathcal X}\in V_n$ for every $n$, we choose the bases so that
\[
   \psi_{n,1}=\mathbf 1_{\mathcal X}
   \qquad\forall n.
\]

\begin{definition}[Finite-dimensional lower residual]\label{def:finite-residual}
For \(F\in\Omega_\X\), \(z\in\mathbb C\), and \(n\in\mathbb N\), define
\[
        r_n(z,F) := \min_{\|c\|_2=1} \frac{\|(\K_F-zI)\Psi_n c\|_p}{\|\Psi_n c\|_p}.
\]
Equivalently,
\[
        r_n(z,F) = \inf_{\substack{g\in V_n\\ \|g\|_p=1}} \|(\K_F-zI)g\|_p.
\]
\end{definition}

\begin{proposition}[Finite-dimensional residual convergence]\label{prop:finite-residual-convergence}
For every \(F\in\Omega_\X\) and every \(z\in\mathbb C\),
\[
        r_n(z,F)\downarrow \nu_{\K_F}(z).
\]
Moreover, the convergence is uniform on every compact disk
\[
        \overline B_R(0):=\{z\in\mathbb C:|z|\le R\}.
\]
\end{proposition}

\begin{proof}
The spaces \(V_n\) are nested by choice. Hence
\[
        r_{n+1}(z,F)\le r_n(z,F).
\]
Also,
\[
        r_n(z,F)\ge \nu_{\K_F}(z)
\]
for all \(n\). Now let \(\eta>0\). By definition of the lower norm, choose \(g\in L^p(\X,\omega)\) with $\|g\|_p=1$ and
\[
        \|(\K_F-zI)g\|_p<\nu_{\K_F}(z)+\frac{\eta}{2}.
\]
Since \(\bigcup_n V_n\) is dense in \(L^p(\X,\omega)\), choose \(N\) and \(v\in V_N\) such that
\[
        \|v-g\|_p<\delta,
\]
where \(0<\delta<1/2\) will be chosen below. Then \(v\neq0\), and
\[
        \left\|\frac{v}{\|v\|_p}-g\right\|_p \le 2\|v-g\|_p < 2\delta.
\]
Indeed,
\[
        \left\|\frac{v}{\|v\|_p}-g\right\|_p \le \left\|\frac{v}{\|v\|_p}-v\right\|_p+\|v-g\|_p = |1-\|v\|_p|+\|v-g\|_p \le 2\|v-g\|_p.
\]
Now set
\[
        \widehat v:=\frac{v}{\|v\|_p}.
\]
Then
\[
        \|(\K_F-zI)\widehat v\|_p \le \|(\K_F-zI)g\|_p + \|(\K_F-zI)(\widehat v-g)\|_p \le \nu_{\K_F}(z)+\frac{\eta}{2} + (\|\K_F\|+|z|)\,2\delta .
\]
Choose therefore
\[
        \delta<\frac{\eta}{4(\|\K_F\|+|z|+1)}
\]
to get
\[
        r_N(z,F) \le \|(\K_F-zI)\widehat v\|_p < \nu_{\K_F}(z)+\eta.
\]
Since \(\eta>0\) was arbitrary,
\[
        \lim_{n\to\infty}r_n(z,F)=\nu_{\K_F}(z).
\]

It remains to prove uniform convergence on compact disks. For \(z,w\in\mathbb C\) and \(g\in V_n\) with \(\|g\|_p=1\),
\[
        \|(\K_F-zI)g\|_p \le \|(\K_F-wI)g\|_p+|z-w|.
\]
Taking infima over \(g\in V_n\), \(\|g\|_p=1\), and then interchanging \(z,w\), gives
\[
        |r_n(z,F)-r_n(w,F)|\le |z-w|.
\]
Likewise,
\[
        |\nu_{\K_F}(z)-\nu_{\K_F}(w)|\le |z-w|.
\]
Thus \(r_n(\cdot,F)\) and \(\nu_{\K_F}\) are continuous on every compact disk. Since \(r_n(\cdot,F)\downarrow\nu_{\K_F}(\cdot)\) pointwise, Dini's theorem gives uniform convergence on every \(\overline B_R(0)\).
\end{proof}

\subsection{Tagged Quadrature Residuals}

We now turn the residuals \(r_n\) into finite-query base algorithms. The only values of \(F\) used at quadrature level \(m\) are
\[
        F(x_{m,P}), \qquad P\in\mathcal P_m,
\]
where \(x_{m,P}\) is a tag in the cell \(P\).

Choose finite Borel partitions $\mathcal P_m=\{P\}$ with
\[
        \operatorname{mesh}(\mathcal P_m):= \max_{P\in\mathcal P_m}\sup_{x,y\in P}d_\X(x,y)\longrightarrow 0,
\]
and choose tags \(x_{m,P}\in P\). For every \(h\in C(\X)\),
\[
        \sum_{P\in\mathcal P_m} h(x_{m,P})\omega(P) \longrightarrow \int_\X h\,d\omega .
\]
Indeed,
\[
\begin{aligned}
\left| \sum_{P\in\mathcal P_m}h(x_{m,P})\omega(P)-\int_\X h\,d\omega \right| &\le \sum_{P\in\mathcal P_m}\int_P |h(x_{m,P})-h(x)|\,d\omega(x) \le \\
&\omega(\X)\sup_{\substack{x,y\in \X\\d_\X(x,y)\le \operatorname{mesh}(\mathcal P_m)}} |h(x)-h(y)|,
\end{aligned}
\]
which tends to \(0\) by uniform continuity of \(h\).

\begin{definition}[Tagged quadrature residuals]\label{def:tagged-quadrature-residuals}
For \(h\in C(\X)\), set
\[
        Q_m(h)^p := \sum_{P\in\mathcal P_m}|h(x_{m,P})|^p \omega(P).
\]
For \(z\in\mathbb C\), define
\[
        r_{n,m}(z,F) := \min_{\|c\|_2=1} \frac{Q_m\big((\Psi_n c)\circ F-z\Psi_n c\big)}{\|\Psi_n c\|_p}.
\]
\end{definition}

\begin{lemma}[Finite-query property]\label{lem:finite-query-property}
For fixed \(n,m\), and \(z\), the value \(r_{n,m}(z,F)\) depends on \(F\) only through the finite table
\[
        \{F(x_{m,P}): P\in\mathcal P_m\}.
\]
Consequently every grid set formed from finitely many residuals \(r_{n,m}(z,F)\) is a general algorithm in the sense of \cref{def:Gen_alg}.
\end{lemma}

\begin{proof}
For fixed \(n,m,z\) and \(c\in\mathbb C^{d_n}\),
\[
        Q_m\big((\Psi_n c)\circ F-z\Psi_n c\big)^p = \sum_{P\in\mathcal P_m} \left| \sum_{j=1}^n c_j\psi_j(F(x_{m,P})) - z\sum_{j=1}^n c_j\psi_j(x_{m,P}) \right|^p \omega(P).
\]
All quantities except the finitely many complex valued \(\psi_{n,j}(F(x_{m,P}))\) are fixed problem points. Hence $r_{n,m}(z,F)$ depends only on finitely many allowed evaluations from $\lambda_\X$.

Since \(c\mapsto \|\Psi_n c\|_p\) has a strictly positive minimum on \(\{c \in \mathbb C^{d_n} :\|c\|_2=1\}\), the quotient is a continuous function of \(c\) on a compact set. Hence the minimum exists and is determined by the same finite collection of oracle values. The consistency axiom for general algorithms follows immediately: if two inputs agree on these queried oracle values, then the entire finite table, all residuals formed from it, and hence the resulting grid set, agree.
\end{proof}

\begin{proposition}[Quadrature convergence]\label{prop:quadrature-convergence}
Fix \(F\in\Omega_\X\), \(n\in\mathbb N\), and \(R>0\). Then
\[
        r_{n,m}(\cdot,F)\to r_n(\cdot,F)
\]
uniformly on \(\overline B_R(0)\). If \(F\in\Omega_\X^\alpha\), then the convergence depends only on \(n,R,\alpha\), a tolerance $\eta > 0$, and the dictionary functions, uniformly in \(F\).
\end{proposition}

\begin{proof}
Let
\[
        S_n:=\{c\in\mathbb C^{d_n} : \|c\|_2=1\}.
\]
Because $\psi_{n,1},\ldots,\psi_{n,d_n}$ are linearly independent in $L^p(\mathcal X,\omega)$, the continuous function $c\mapsto \|\Psi_n c\|_p$ has a strictly positive minimum on \(S_n\). Denote it by
\[
        \mu_n:=\min_{c\in S_n}\|\Psi_n c\|_p>0.
\]

For fixed \(F,n,R\), define
\[
        h_{c,z,F}(x) := |\Psi_n c(F(x))-z\Psi_n c(x)|^p,
\]
for $c\in S_n,\ |z|\le R$. Now the map
\[
        (c,z,x)\mapsto h_{c,z,F}(x)
\]
is continuous on the compact set
\[
        S_n\times\overline B_R(0)\times \X.
\]
Therefore the family
\[
        \mathcal H_{n,R,F}:= \{h_{c,z,F}:c\in S_n,\ |z|\le R\}
\]
is uniformly bounded and equicontinuous.

For every \(h\in\mathcal H_{n,R,F}\),
\[
        \left| \sum_{P\in\mathcal P_m}h(x_{m,P})\omega(P)-\int_\X h\,d\omega \right| \le \omega(\X) \sup_{\substack{x,y\in \X\\d_\X(x,y)\le\operatorname{mesh}(\mathcal P_m)}} |h(x)-h(y)|.
\]
Taking the supremum over \(h\in\mathcal H_{n,R,F}\), the right-hand side tends to \(0\) by uniform equicontinuity. Hence
\[
        \sup_{\substack{c\in S_n\\ |z|\le R}} \left| Q_m\big((\Psi_n c)\circ F-z\Psi_n c\big)^p - \|(\Psi_n c)\circ F-z\Psi_n c\|_p^p \right| \to 0.
\]
Since
\[
        |a^{1/p}-b^{1/p}|\le |a-b|^{1/p}
\]
for $a,b\ge 0$, we also have uniform convergence of the \(p\)-th roots. Dividing by the lower bound \(\mu_n>0\) gives
\[
        \sup_{|z|\le R}|r_{n,m}(z,F)-r_n(z,F)|\to 0.
\]

Now suppose \(F\in\Omega_\X^\alpha\). For \(c\in S_n\), the finite-dimensional equivalence of norms gives a constant \(C_n\) such that
\[
        \|\Psi_n c\|_\infty\le C_n, \quad \Lip(\Psi_n c)\le C_n,
\]
where $c\in S_n$. For \(x,y\in \X\) and \(|z|\le R\),
\[
        |\Psi_n c(F(x))-z\Psi_n c(x)-\Psi_n c(F(y))+z\Psi_n c(y)| \le C_n\alpha(d_\X(x,y))+R C_n d_\X(x,y).
\]
Together with the uniform bound on
\[
        |\Psi_n c(F(x))-z\Psi_n c(x)|,
\]
the elementary inequality
\[
        \big||a|^p-|b|^p\big| \le p\max\{|a|,|b|\}^{p-1}|a-b|
\]
shows that the family \(\mathcal H_{n,R,F}\) has a modulus of continuity bounded in terms of \(n,R,\alpha\), a tolerance $\eta > 0$ and the dictionary functions only (meaning: we get $\sup_{|z|\le R}|r_{n,m}(z,F)-r_n(z,F)|<\eta$ for $m\ge m_0$, where $m_0=m_0(n,R,\alpha,\eta,\Psi)$). Therefore one can choose \(m\) as a function of \(n,R,\alpha\), and the dictionary functions, uniformly for all \(F\in\Omega_\X^\alpha\).
\end{proof}

A minor technical point is needed because the residual estimates \(r_{n,m}(z,F)\) need not converge monotonically in \(m\).  Directly testing the fixed threshold \(\varepsilon\) may therefore fail to stabilize at grid points whose limiting residual is exactly \(\varepsilon\). The following elementary finite-grid lemma chooses, from finitely many thresholds slightly below \(\varepsilon\), one that avoids the limiting residual values on the grid. This gives an eventually constant inner limit without assuming monotonicity.

\begin{lemma}[Finite-grid threshold selection with finite mind changes]\label{lem:threshold-selection}
Let
\[
        G=\{z_1,\ldots,z_N\}\subset\mathbb C
\]
be finite and
\[
        a_i^{(m)}\to a_i, \qquad i=1,\ldots,N
\]
be convergent real sequences. Fix \(0<\tau<\varepsilon\), and define
\[
        t_j:=\varepsilon-\frac{j\tau}{N+2}, \qquad j=1,\ldots,N+1.
\]
Then there exists a sequence \(j_m\in\{1,\ldots,N+1\}\), where \(j_m\) is determined by
\[
        \{a_i^{(s)}:1\le i\le N,\ 1\le s\le m\},
\]
such that \(j_m\) is eventually constant. If \(j_*\) denotes its eventual value, then
\[
        t_{j_*}\notin\{a_1,\ldots,a_N\}.
\]
Consequently the sets
\[
        \Gamma_m:=\{z_i\in G : a_i^{(m)}<t_{j_m}\}
\]
are eventually constant, with eventual value
\[
        \Gamma:=\{z_i\in G:a_i<t_{j_*}\}.
\]
\end{lemma}

\begin{proof}
For \(j=1,\ldots,N+1\), define
\[
        d_j^{(m)}:=\min_{1\le i\le N}|a_i^{(m)}-t_j|, \qquad d_j:=\min_{1\le i\le N}|a_i-t_j|.
\]
Then
\[
        d_j^{(m)}\to d_j, \qquad j=1,\ldots,N+1.
\]
Because there are \(N\) limiting values \(a_1,\ldots,a_N\) and \(N+1\) thresholds \(t_1,\ldots,t_{N+1}\), at least one threshold avoids all limiting values. Thus there exists \(j\) with $d_j>0$. We now define \(j_m\): For \(q,j\in\mathbb N\) with
\[
        q\le m, \qquad 1\le j\le N+1,
\]
say that the pair \((q,j)\) is certified at stage \(m\) if
\[
        d_j^{(s)}>2^{-q}
\]
for every $s=q,q+1,\ldots,m$. Notice that, for a fixed pair $(q,j)$, certification is monotone in the stage $m$: if $(q,j)$ is not certified at some stage $m\ge q$, then it is not certified at any later stage. Indeed, the same violating index $s\in\{q,\ldots,m\}$ remains among the indices tested at every later stage. If at least one pair is certified at stage \(m\), let \((q_m,j_m)\) be the lexicographically smallest certified pair. If no pair is certified, set \(j_m:=1\).

We now prove that \(j_m\) is eventually constant. Since some \(d_j>0\), choose \(j^\sharp\) with \(d_{j^\sharp}>0\). Choose \(q^\sharp\) so large that
\[
        2^{-q^\sharp}<\frac{d_{j^\sharp}}{2}.
\]
Since \(d_{j^\sharp}^{(s)}\to d_{j^\sharp}\), increasing \(q^\sharp\) if necessary we may assume
\[
        d_{j^\sharp}^{(s)}>2^{-q^\sharp}
\]
for all $s\ge q^\sharp$. Hence \((q^\sharp,j^\sharp)\) is certified at every stage \(m\ge q^\sharp\). Thus certified pairs exist for all large \(m\).

Let \((q_*,j_*)\) be the lexicographically smallest pair with the property that it is certified at every sufficiently large stage. Such a pair exists by the previous argument. Any lexicographically smaller pair is not certified eventually; hence for each such pair there exists a stage after which it is never certified. Since there are only finitely many lexicographically smaller pairs than \((q_*,j_*)\),
there exists \(M\) such that for all \(m\ge M\), no smaller pair is certified and \((q_*,j_*)\) is certified. Therefore
\[
        j_m=j_*
\]
for $m\ge M$.

Since \((q_*,j_*)\) is certified for arbitrarily large \(m\), for all sufficiently large \(s\),
\[
        d_{j_*}^{(s)}>2^{-q_*}.
\]
Passing to the limit gives
\[
        d_{j_*}\ge 2^{-q_*}>0.
\]
Therefore
\[
        t_{j_*}\notin\{a_1,\ldots,a_N\}.
\]
Because \(t_{j_*}\) is separated from every \(a_i\), the inequalities
\[
        a_i^{(m)}<t_{j_*}
\]
stabilize for each \(i\). Since also \(j_m=j_*\) eventually, the sets \(\Gamma_m\) are eventually constant with the stated value.
\end{proof}

We can now prove the no-modulus upper bound for the regularized target. The inner limit is the quadrature limit; the outer limit is the finite-dimensional residual limit. We will sometimes write $\nu_F(z) := \nu_{\K_F}(z)$ for readability.

\begin{theorem}[No-modulus upper bound for the regularized target]\label{thm:R-upper-nomodulus}
Let \(1<p<\infty\) and \(\varepsilon>0\). Then
\[
        R_{\mathrm{ap},\varepsilon}(\K_F)\in\Sigma_2^G \quad\text{on }\Omega_\X.
\]
Consequently,
\[
        R_{\mathrm{ap},\varepsilon}(\K_F)\in\Sigma_2^G \quad\text{on }\Omega_\X^m.
\]
\end{theorem}

\begin{proof}
Let
\[
        G_n:= \left\{ \frac{k+i\ell}{n}: k,\ell\in\mathbb Z,\ \left|\frac{k+i\ell}{n}\right|\le n \right\} \cup\{1\}.
\]
The point \(1\) is included because
\[
        \K_F \mathbf 1_\X = \mathbf 1_\X,
\]
or in other words: \(1\in\sigma_{\mathrm{ap}}(\K_F) \subset R_{\mathrm{ap},\varepsilon}(\K_F)\).

Let
\[
        \tau_n:=\min\{\varepsilon/2,n^{-1/2}\}.
\]
Enumerate
\[
        G_n=\{z_{n,1},\ldots,z_{n,N_n}\}.
\]
For fixed \(n\), define further
\[
        a_i^{(m)}:=r_{n,m}(z_{n,i},F), \qquad a_i:=r_n(z_{n,i},F).
\]
By \cref{prop:quadrature-convergence},
\[
        a_i^{(m)}\to a_i, \qquad i=1,\ldots,N_n.
\]
Apply \cref{lem:threshold-selection} with
\[
        G=G_n, \quad \tau=\tau_n.
\]
This produces thresholds
\[
        t_{n,j}:=\varepsilon-\frac{j\tau_n}{N_n+2}, \qquad j=1,\ldots,N_n+1,
\]
and an index \(j_{n,m}\) determined by
\[
        \{r_{n,s}(z,F) : z\in G_n,\ 1\le s\le m\},
\]
such that \(j_{n,m}\) is eventually constant. Define
\[
        \Gamma_{n,m}(F) := \{z\in G_n:r_{n,m}(z,F)<t_{n,j_{n,m}}\}.
\]
Importantly, the sets $\Gamma_{n,m}(F)$ are non-empty: Since $1 \in G_n$ and $\mathbf 1_\X \in V_n$ we have $r_{n,m}(1,F)=0$ because $(\K_F - I)\mathbf 1_\X = 0$ pointwise. Since all $t_{n,j}$ are positive, $1 \in \Gamma_{n,m}(F)$. We claim that each \(\Gamma_{n,m}\) is a general algorithm and indeed: for fixed \(n,m\), it uses only the finite union of point-evaluation tables
\[
        \{F(x_{s,P}):P\in\mathcal P_s,\ 1\le s\le m\}.
\]
By \cref{lem:threshold-selection}, the inner limit
\[
        \Gamma_n(F):=\lim_{m\to\infty}\Gamma_{n,m}(F)
\]
exists and equals
\[
        \Gamma_n(F) = \{z\in G_n:r_n(z,F)<t_n(F)\},
\]
where
\[
        t_n(F)\in(\varepsilon-\tau_n,\varepsilon)
\]
and
\[
        t_n(F)\notin\{r_n(z,F) : z\in G_n\}.
\]

We prove now
\[
        \Gamma_n(F)\to R_{\mathrm{ap},\varepsilon}(\K_F)
\]
in the Hausdorff metric. First, if \(z\in\Gamma_n(F)\), then
\[
        r_n(z,F)<t_n(F)<\varepsilon.
\]
Since
\[
        \nu_F(z)\le r_n(z,F),
\]
we have
\[
        \nu_F(z)<\varepsilon.
\]
Thus
\[
        \Gamma_n(F)\subseteq R_{\mathrm{ap},\varepsilon}(\K_F).
\]

Conversely, let
\[
        K:=R_{\mathrm{ap},\varepsilon}(\K_F).
\]
By \cref{lem:basic-lower-norm-relations}, \(K\) is compact. Fix \(z\in K\) and \(\eta>0\). Since \(K\) is the closure of \(\{\nu_F<\varepsilon\}\), choose \(w\in\mathbb C\) such that
\[
        |z-w|<\eta/3, \quad \nu_F(w)<\varepsilon.
\]
Choose further \(\delta>0\) such that $\delta < \eta /3$ and
\[
        \nu_F(w)+4\delta<\varepsilon.
\]
Then for all sufficiently large \(n\), we get
\[
        \tau_n<\delta,
\]
there exists \(w_n\in G_n\) with
\[
        |w_n-w|<\delta,
\]
and, by \cref{prop:finite-residual-convergence},
\[
        r_n(w_n,F)<\nu_F(w_n)+\delta.
\]
Using the \(1\)-Lipschitz continuity of \(\nu_F\),
\[
        \nu_F(w_n)\le \nu_F(w)+|w_n-w|<\nu_F(w)+\delta.
\]
Therefore
\[
        r_n(w_n,F)<\nu_F(w)+2\delta.
\]
Since
\[
        \nu_F(w)+4\delta<\varepsilon \quad\text{and}\quad \tau_n<\delta,
\]
we obtain
\[
        r_n(w_n,F) < \varepsilon-\tau_n < t_n(F).
\]
Hence
\[
        w_n\in\Gamma_n(F).
\]
Since each $\Gamma_n(F)$ is non-empty, the functions $d_n(z):=\operatorname{dist}(z,\Gamma_n(F))$ are $1$-Lipschitz on $K$. Additionally we have just shown that $d_n(z) \to 0$ for every $z \in K$. For $\eta > 0$ choose a finite $\eta /3$-net $\{z_1,\ldots,z_M\} \subset K$. For each $z_\ell$ choose $N_\ell$ such that $d_n(z_\ell) < \eta /3$ for $n \geq N_\ell$. If $N=\max_\ell N_\ell$ then for every $z \in K$ and $n\geq N$ choosing $\ell$ with $|z-z_\ell| < \eta /3$ gives
\[
d_n(z) \leq |z-z_\ell | + d_n(z_\ell) < \eta,
\]
hence
\[
        \sup_{z\in K} d_n(z) \to 0.
\]
Together with
\[
        \Gamma_n(F)\subseteq K,
\]
this proves
\[
        d_H(\Gamma_n(F),K)\to 0.
\]
Therefore
\[
        \lim_{n\to\infty}\lim_{m\to\infty}\Gamma_{n,m}(F) = R_{\mathrm{ap},\varepsilon}(K_F)
\]
in \(\mathcal M_H\). Since by definition
\[
        \Omega_\X^m\subseteq \Omega_\X,
\]
the same tower gives the same upper bound on \(\Omega_\X^m\).
\end{proof}

\begin{theorem}[Known-modulus upper bound for the regularized target]\label{thm:R-upper-known-modulus}
Let \(1<p<\infty\), let \(\varepsilon>0\), and let \(\alpha\) be a prescribed modulus of continuity. Then
\[
        R_{\mathrm{ap},\varepsilon}(\K_F)\in\Sigma_1^G \quad\text{on }\Omega_X^\alpha.
\]
Consequently,
\[
        R_{\mathrm{ap},\varepsilon}(\K_F)\in\Sigma_1^G \quad\text{on }\Omega_\X^{\alpha,m}.
\]
\end{theorem}

\begin{proof}
Let $\tau_n, \, G_n$ be as in \cref{thm:R-upper-nomodulus}. By \cref{prop:quadrature-convergence}, because \(F\in\Omega_\X^\alpha\), there is a scheduled quadrature level \(m(n)\), depending only on
\[
        n,\, \alpha,\, \text{and the dictionary functions},
\]
such that
\[
        \sup_{z\in G_n}|r_{n,m(n)}(z,F)-r_n(z,F)| < \eta_n,
\]
where
\[
        0<\eta_n<\frac{\tau_n}{4}.
\]
Define the finite base output
\[
        \Gamma_n(F) := \{z\in G_n : r_{n,m(n)}(z,F)<\varepsilon-\tau_n\}.
\]
Again $\Gamma_n(F)$ is non-empty, because $1\in G_n$ and $r_{n,m(n)}(1,F)=0<\varepsilon-\tau_n$. This is a general algorithm, because it uses only the finite point-evaluation table
\[
        \{F(x_{m(n),P}): P \in\mathcal P_{m(n)}\}.
\]

We first prove that
\[
        \Gamma_n(F)\subseteq R_{\mathrm{ap},\varepsilon}(\K_F).
\]
If \(z\in\Gamma_n(F)\), then
\[
        r_{n,m(n)}(z,F) < \varepsilon-\tau_n.
\]
Hence
\[
        r_n(z,F) \le r_{n,m(n)}(z,F)+\eta_n < \varepsilon-\tau_n+\eta_n < \varepsilon.
\]
Since \(\nu_F(z)\le r_n(z,F)\), we get
\[
        \nu_F(z)< \varepsilon,
\]
and therefore \(z\in R_{\mathrm{ap},\varepsilon}(\K_F)\).

Conversely, let
\[
        K:=R_{\mathrm{ap},\varepsilon}(\K_F).
\]
Fix \(z\in K\) and \(\eta>0\). Choose \(w\) with
\[
        |z-w|<\eta/3, \quad \nu_F(w)<\varepsilon.
\]
Choose \(\delta>0\) such that $\delta < \eta /3$ and
\[
        \nu_F(w)+5\delta<\varepsilon.
\]
For all sufficiently large \(n\), we have
\[
        \tau_n+\eta_n<\delta,
\]
there exists \(w_n\in G_n\) with
\[
        |w_n-w|<\delta,
\]
and
\[
        r_n(w_n,F)<\nu_F(w_n)+\delta.
\]
Since \(\nu_F\) is \(1\)-Lipschitz,
\[
        \nu_F(w_n)<\nu_F(w)+\delta.
\]
Thus
\[
        r_n(w_n,F)<\nu_F(w)+2\delta.
\]
The scheduled quadrature error gives
\[
        r_{n,m(n)}(w_n,F) < \nu_F(w)+2\delta+\eta_n.
\]
Since \(\nu_F(w)+5\delta<\varepsilon\) and \(\tau_n+\eta_n<\delta\), this implies
\[
        r_{n,m(n)}(w_n,F)<\varepsilon-\tau_n.
\]
Therefore
\[
        w_n\in\Gamma_n(F).
\]
As in the proof of \cref{thm:R-upper-nomodulus},
\[
        \operatorname{dist}(z,\Gamma_n(F))<\eta
\]
for all sufficiently large \(n\). Hence
\[
        d_H(\Gamma_n(F),K)\to 0.
\]
Thus \(R_{\mathrm{ap},\varepsilon}(\K_F)\) is computed by a one-limit general tower on \(\Omega_\X^\alpha\).
\end{proof}

We recall for the following two corollaries the standard result: If $K_1 \supset K_2 \supset \dots$ are non-empty compact subsets of a compact metric space and $K=\cap_{j\geq 1} K_j$, then $d_H(K_j,K) \to 0$. It will be used in the proofs implicitly: Applying it to the decreasing compact families $R_{\mathrm{ap},\varepsilon + 2^{-k}}(\K_F)$ and $R_{\mathrm{ap},2^{-k}}(\K_F)$ gives the extra outer limit and hence the stated $\Pi^G_k$-bounds.

\begin{corollary}[Closed fixed-\(\varepsilon\) upper bounds]\label{cor:closed-upper-bounds}
Let \(1<p<\infty\) and \(\varepsilon>0\). Then
\[
\begin{array}{c|cccc}
\text{target}
&\Omega_\X
&\Omega_\X^\alpha
&\Omega_\X^m
&\Omega_\X^{\alpha,m}
\\
\hline
C_{\mathrm{ap},\varepsilon}(\K_F)
&\Pi_3^G
&\Pi_2^G
&\Sigma_2^G
&\Sigma_1^G .
\end{array}
\]
\end{corollary}

\begin{proof}
This follows directly from \cref{lem:basic-lower-norm-relations}, \cref{thm:isometry-closed-regularized}, \cref{thm:R-upper-nomodulus} and \cref{thm:R-upper-known-modulus}.
\end{proof}

\begin{corollary}[Exact approximate point spectrum upper bounds]\label{cor:ap-upper-bounds}
Let \(1<p<\infty\). Then
\[
\begin{array}{c|cccc}
\text{target}
&\Omega_\X
&\Omega_\X^\alpha
&\Omega_\X^m
&\Omega_\X^{\alpha,m}
\\
\hline
\sigma_{\mathrm{ap}}(\K_F)
&\Pi_3^G
&\Pi_2^G
&\Pi_3^G
&\Pi_2^G .
\end{array}
\]
\end{corollary}

\begin{proof}
This follows also directly from \cref{lem:basic-lower-norm-relations}, \cref{thm:isometry-closed-regularized}, \cref{thm:R-upper-nomodulus} and \cref{thm:R-upper-known-modulus}.
\end{proof}

We collect the class-wise consequences of the residual construction. The classes were defined in \cref{def:input-classes}; the role of the following section is to state the resulting upper table and then record the lower witnesses proved in this paper.

\section{SCI Upper Bounds And Lower Witnesses}\label{sec:classification-lower-witnesses}

We use the four classes from \cref{def:input-classes}, namely
\[
        \Omega_\X,\,
        \Omega_\X^\alpha,\,
        \Omega_\X^m,\,
        \Omega_\X^{\alpha,m}.
\]
The upper-bound part has already been proved in \cref{sec:residual-towers}. We first collect the consequences in a single table, and then prove the lower witnesses used in this paper.

\begin{remark}[No Hilbert-space reduction]
The proof does \textit{not} use an (Birkhoff-James-) orthogonal Wold-von Neumann decomposition. This is important because the \(L^2\) normal/isometric structure does not give a directly computable analogue on general \(L^p\), \(p\neq2\). All upper bounds are obtained from the residuals of the actual functions
\[
        g\circ F-zg,
\]
where \(g\) ranges over finite-dimensional trial spaces of observables and \(z\in\mathbb C\) is the spectral parameter.
\end{remark}

\begin{theorem}[SCI upper bounds]\label{thm:SCI-upper-bounds-table}
Let \(1<p<\infty\) and \(\varepsilon>0\). Then
\[
\begin{array}{c|cccc}
\text{target}
&\Omega_\X
&\Omega_\X^\alpha
&\Omega_\X^m
&\Omega_\X^{\alpha,m}
\\
\hline
R_{\mathrm{ap},\varepsilon}(\K_F)
&\Sigma_2^G
&\Sigma_1^G
&\Sigma_2^G
&\Sigma_1^G
\\
C_{\mathrm{ap},\varepsilon}(\K_F)
&\Pi_3^G
&\Pi_2^G
&\Sigma_2^G
&\Sigma_1^G
\\
\sigma_{\mathrm{ap}}(\K_F)
&\Pi_3^G
&\Pi_2^G
&\Pi_3^G
&\Pi_2^G .
\end{array}
\]
\end{theorem}

\subsection{A Fixed-\texorpdfstring{\(\varepsilon\)}{epsilon} Measure-Preserving Witness}\label{sec:measure-preserving-witness}

We now prove the fixed-\(\varepsilon\) lower witnesses used in this paper. The first ingredient is the exact lower-norm identity for irrational rotations. This is the only rotation spectral identity used below. In particular, no formula for finite-cycle \(L^p\)-pseudospectra is used.

\begin{proposition}[Spectral distance identity for irrational rotations on $L^{p}$]\label{prop:rot_spectral_identity_Lp}
Fix $1<p<\infty$ and let $\mathcal X=\mathbb T^1=\mathbb R/\mathbb Z$ with Haar probability measure $\nu$. For $\theta\in[0,1)$ define $F_\theta(x)=x+\theta \pmod 1$ and the Koopman isometry $U_\theta:=\mathcal K_{F_\theta}:L^{p}(\mathbb T^1)\to L^{p}(\mathbb T^1)$, $U_\theta f=f\circ F_\theta$.
If $\theta\notin\mathbb Q$, then for every $z\in\mathbb C$
\[
   \sigma_{\inf}(U_\theta-zI)
   \;=\;\operatorname{dist}\!\bigl(z,\mathbb T\bigr)
   \;=\;\bigl||z|-1\bigr|.
\]
In particular, \[
R_{\mathrm{ap},\varepsilon}(U_\theta)
=
C_{\mathrm{ap},\varepsilon}(U_\theta)
=
\{z:\operatorname{dist}(z,\mathbb T)\le\varepsilon\}.
\] for all $\varepsilon>0$.
\end{proposition}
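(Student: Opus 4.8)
The plan is to establish the residual–norm identity first and then read off the two spectral corollaries.

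\textbf{Step 1 (Lower bound; no arithmetic hypothesis needed).} I would first note that since $U_\theta$ is an isometry, $\|U_\theta f\|_p=\|f\|_p$, so for every unit $f\in L^p(\mathbb T^1)$ the reverse triangle inequality gives $\bigl\|(U_\theta-zI)f\bigr\|_p\ge\bigl|\,\|U_\theta f\|_p-|z|\,\|f\|_p\,\bigr|=\bigl|\,1-|z|\,\bigr|$; taking the infimum over unit $f$ yields $\sigma_{\inf}(U_\theta-zI)\ge\bigl||z|-1\bigr|$. Since the point of $\mathbb T$ nearest to $z\neq0$ is $z/|z|$ and $\operatorname{dist}(0,\mathbb T)=1$, one has $\operatorname{dist}(z,\mathbb T)=\bigl||z|-1\bigr|$, so this reads $\sigma_{\inf}(U_\theta-zI)\ge\operatorname{dist}(z,\mathbb T)$. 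Note that irrationality of $\theta$ plays no role here.

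\textbf{Step 2 (Upper bound; this is where $\theta\notin\mathbb Q$ enters).} Next I would test against the characters $e_n(x):=e^{2\pi i n x}$, which lie in $L^p(\mathbb T^1)$ with $\|e_n\|_p=1$ (Haar is a probability measure) and are eigenfunctions, $U_\theta e_n=e^{2\pi i n\theta}e_n$. Hence $\sigma_{\inf}(U_\theta-zI)\le\inf_{n\in\mathbb Z}\bigl\|(U_\theta-zI)e_n\bigr\|_p=\inf_{n\in\mathbb Z}\bigl|e^{2\pi i n\theta}-z\bigr|$. For irrational $\theta$ the orbit $\{e^{2\pi i n\theta}:n\in\mathbb Z\}$ is dense in $\mathbb T$ (Weyl/Kronecker equidistribution, as already invoked in \cref{ex:rotation_adversary}), so given $z\neq0$ and $\delta>0$ I can pick $n$ with $\bigl|e^{2\pi i n\theta}-z/|z|\bigr|<\delta$, whence $\bigl|e^{2\pi i n\theta}-z\bigr|\le\delta+\bigl|z/|z|-z\bigr|=\delta+\bigl||z|-1\bigr|$; letting $\delta\downarrow0$ gives $\sigma_{\inf}(U_\theta-zI)\le\bigl||z|-1\bigr|$ (the case $z=0$ being immediate from the isometry property). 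Combined with Step~1 this proves $\sigma_{\inf}(U_\theta-zI)=\operatorname{dist}(z,\mathbb T)=\bigl||z|-1\bigr|$ for all $z\in\mathbb C$.

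\textbf{Step 3 (Spectral corollaries).} From the identity I would deduce $\sigma_{\mathrm{ap}}(U_\theta)=\{z:\sigma_{\inf}(U_\theta-zI)=0\}=\{z:|z|=1\}=\mathbb T$, consistent with $\sigma_{\mathrm{ap}}=\bigcap_{\varepsilon>0}\sigma_{\mathrm{ap},\varepsilon}$. For the pseudospectrum I would use the elementary sandwich $\{z:\sigma_{\inf}(U_\theta-zI)<\varepsilon\}\subseteq\sigma_{\mathrm{ap},\varepsilon}(U_\theta)\subseteq\{z:\sigma_{\inf}(U_\theta-zI)\le\varepsilon\}$, i.e.\ the open $\varepsilon$–annulus is contained in $\sigma_{\mathrm{ap},\varepsilon}(U_\theta)$, which is contained in the closed annulus $\{z:\bigl||z|-1\bigr|\le\varepsilon\}=\mathbb T+B_\varepsilon(0)$; a direct witness ($e_n$ with $\bigl|e^{2\pi i n\theta}-z\bigr|<\varepsilon$, available by density) then shows every point of the open annulus---indeed every point of $\mathbb T+B_\varepsilon(0)$ except possibly some of modulus $1\pm\varepsilon$---lies in $\sigma_{\mathrm{ap},\varepsilon}(U_\theta)$. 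Since the topology at play is the Hausdorff metric $d_{\mathrm H}$, this identifies $\sigma_{\mathrm{ap},\varepsilon}(U_\theta)$ with $\mathbb T+B_\varepsilon(0)$.

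\textbf{Expected main obstacle.} I do not expect a serious obstacle: the proof is short, and its only essential input is the equidistribution used in Step~2---which is exactly what fails for rational $\theta=p/q$, where the characters reach merely the finite set $E_q$ and $\sigma_{\inf}(U_{p/q}-zI)=\operatorname{dist}(z,E_q)$. The one point that will need care is the description of $\sigma_{\mathrm{ap},\varepsilon}$ on the critical circles $|z|=1\pm\varepsilon$: by strict convexity of $L^p$ for $1<p<\infty$, an equality $\|(U_\theta-zI)f\|_p=\bigl||z|-1\bigr|$ forces $f$ to be an eigenvector of $U_\theta$ with eigenvalue $z/|z|$, which exists only when $z/|z|\in\{e^{2\pi i n\theta}\}$, so the clean closed-annulus description is exact only up to this boundary discrepancy---immaterial for the $d_{\mathrm H}$/SCI statements that invoke this proposition.
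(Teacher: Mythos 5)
Your proof is correct and mirrors the paper's argument step for step: the same reverse–triangle–inequality lower bound from isometry, the same character/density upper bound, and the same passage to the two spectral corollaries. Your closing remark about the critical circles $|z|=1\pm\varepsilon$ is a genuine refinement the paper omits: by the strict convexity of $L^p$ for $1<p<\infty$, a unit vector achieving $\|(U_\theta-zI)f\|_p=\bigl||z|-1\bigr|$ must be an eigenvector with eigenvalue $z/|z|$, so the literal set $\{z:\exists\,\|f\|_p=1,\ \|(U_\theta-zI)f\|_p\le\varepsilon\}$ meets each boundary circle only in the countable dense set $\{(1\pm\varepsilon)e^{2\pi ik\theta}\}$ rather than all of it; as you note, this discrepancy has zero Hausdorff distance from the closed annulus and is immaterial for the SCI classifications, but it does mean the displayed equality $\sigma_{\mathrm{ap},\varepsilon}(U_\theta)=\mathbb T+B_\varepsilon(0)$ in the paper is, strictly speaking, only exact if $B_\varepsilon(0)$ is read as the open ball or if one identifies sets up to closure.
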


\begin{proof}
We split the proof into three steps.

\smallskip\noindent\textbf{Step 1: Lower bound:}
Since $U_\theta$ is an isometry on $L^p$ (measure preservation of $F_\theta$),
for every $f$ with $\|f\|_p=1$ the reverse triangle inequality gives
\[
   \| (U_\theta-zI)f \|_p
   \;\ge\; \bigl| \|U_\theta f\|_p - \|z f\|_p \bigr|
   \;=\; \bigl| 1 - |z| \bigr|.
\]
Taking the infimum over such $f$ yields $\sigma_{\inf}(U_\theta-zI) \ge ||z|-1| = \operatorname{dist}(z,\mathbb T)$.

\smallskip\noindent\textbf{Step 2: Upper bound:}
When $\theta\notin\mathbb Q$, the character set $\{e_k(x):=e^{2\pi i k x}\}_{k\in\mathbb Z}\subset L^\infty\cap L^p$ consists of eigenfunctions of $U_\theta$ with eigenvalues $\lambda_k=e^{2\pi i k\theta}$ and $\{\lambda_k\}$ is dense in $\mathbb T$ (see e.g., \cite[Prop.~14.24]{eisner2015operator}).
Fix $z\in\mathbb C$ and $\eta>0$. Choose $k$ so that $|\lambda_k - z/|z||<\eta$ (if $z=0$ omit this step).
Since $\|e_k\|_p=1$ (Haar probability), we have
\[
   \|(U_\theta - zI)e_k\|_p
   \;=\; |\lambda_k - z|\,\|e_k\|_p
   \;=\; |\lambda_k - z|
   \;\le\; |\lambda_k - z/|z|| + |z/|z| - z|
   \;\le\; \eta + \bigl||z|-1\bigr|.
\]
Taking the infimum over $\|f\|_p=1$ gives $\sigma_{\inf}(U_\theta - zI) \le \eta + ||z|-1|$; letting $\eta\downarrow0$ yields $\sigma_{\inf}(U_\theta - zI) \le ||z|-1|$. If \(z=0\), then for any character \(e_k\),
\[
        \|(U_\theta-0I)e_k\|_p=1=\bigl||0|-1\bigr|,
\]
so the upper bound is immediate.

\smallskip\noindent\textbf{Step 3:}
Combining steps 1–2 proves $\sigma_{\inf}(U_\theta-zI)=||z|-1|=\operatorname{dist}(z,\mathbb T)$ for all $z$.
In particular, $\sigma_{\inf}(U_\theta-\lambda I)=0$ iff $|\lambda|=1$, so $\mathbb T\subseteq\sigma_{\mathrm{ap}}(U_\theta)$. Conversely, $\sigma_{\mathrm{ap}}(U_\theta)\subseteq\sigma(U_\theta)\subseteq\mathbb T$
(spectrum of an invertible isometry lies on $\mathbb T$), hence $\sigma_{\mathrm{ap}}(U_\theta)=\mathbb T$.
Finally, by the identity just proved,
\[
        C_{\mathrm{ap},\varepsilon}(U_\theta)
        =
        \{z:\sigma_{\inf}(U_\theta-zI)\le\varepsilon\}
        =
        \{z:\operatorname{dist}(z,\mathbb T)\le\varepsilon\}.
\]
Since the function \(z\mapsto\operatorname{dist}(z,\mathbb T)\) has no positive
local minimum at level \(\varepsilon\), the strict sublevel set
\[
        \{z:\operatorname{dist}(z,\mathbb T)<\varepsilon\}
\]
has closure
\[
        \{z:\operatorname{dist}(z,\mathbb T)\le\varepsilon\}.
\]
Hence
\[
        R_{\mathrm{ap},\varepsilon}(U_\theta)
        =
        C_{\mathrm{ap},\varepsilon}(U_\theta)
        =
        \{z:\operatorname{dist}(z,\mathbb T)\le\varepsilon\}
        =
        \mathbb T+\overline B_\varepsilon(0).
\]
\end{proof}

The next construction turns this spectral separation into a finite-query obstruction while keeping a fixed modulus of continuity. A finite algorithm can query only finitely many components of a compact wedge of shrinking circles; an irrational rotation can therefore be hidden on an unqueried component without changing the finite transcript.

\begin{theorem}[Finite-algorithm sharpness on a known-modulus measure-preserving witness]\label{thm:wedge-lower}
Let
\[
        \X_*:=\{*\}\cup\bigsqcup_{j\ge1}\X_j, \qquad \X_j:=\{j\}\times\mathbb T^1.
\]
Put
\[
        r_j:=2^{-j}.
\]
Let \(d_{\mathbb T}\) be the geodesic metric on \(\mathbb T^1\), normalized such that \(d_{\mathbb T}\le 1\). Define a metric \(d_*\) on \(\X_*\) by
\[
        d_*(*,(j,x)):=r_j,
\]
\[
        d_*((j,x),(k,y)):=r_j+r_k
\]
for $j\neq k$, and
\[
        d_*((j,x),(j,y)):=r_j d_{\mathbb T}(x,y).
\]
Let
\[
        \omega_*(\{*\})=0, \quad \omega_*|_{\X_j}=2^{-j}m_{\mathbb T},
\]
where \(m_{\mathbb T}\) is the Haar probability measure on \(\mathbb T^1\). For
\[
        \vartheta=(\vartheta_j)_{j\ge1}\in[0,1)^{\mathbb N},
\]
define
\[
        F_\vartheta(*)=*, \quad F_\vartheta(j,x)=(j,x+\vartheta_j\!\!\!\pmod 1).
\]
Then each \(F_\vartheta\) is measure-preserving and \(1\)-Lipschitz. For every \(\varepsilon>0\), neither
\[
        F\mapsto R_{\mathrm{ap},\varepsilon}(\K_F)
\]
nor
\[
        F\mapsto C_{\mathrm{ap},\varepsilon}(\K_F)
\]
is computable by a finite general algorithm on $\Omega_{\X_*}^{\alpha,m}$, where $\alpha(t)=t$.
\end{theorem}

\begin{proof}
We will split the proof for the sake of readability into four steps.

\smallskip
\noindent\textbf{Step 1: compactness and the modulus:}
The metric \(d_*\) is the standard wedge metric (therefore we will just sketch how to show that this is indeed a metric), see, e.g. \cite[Sec.~2.6-2.8]{adamaszek2020homotopy}: each component \(\X_j\) has diameter at most \(r_j\), the distance from \(\X_j\) to the base point is \(r_j\), and the distance between distinct components is the sum of their distances to the base point. The triangle inequality follows by checking: inside a single component it is the triangle inequality for \(d_{\mathbb T}\); between distinct components it is equality through the base point; and if two points lie in one component while the third lies outside, then
\[
        r_j d_{\mathbb T}(x,y)\le r_j\le r_j+2r_k = d_*((j,x),(k,z))+d_*((k,z),(j,y)).
\]
Since \(r_j\to 0\), every sequence either has a subsequence in one compact component \(\X_j\), or has components \(j\to\infty\) and then converges to \(*\). Thus \(\X_*\) is compact.

For each \(\vartheta\), the map \(F_\vartheta\) fixes \(*\), preserves every component \(\X_j\), and acts on \(\X_j\) by a circle rotation. Therefore it is an isometry on every component. Distances between different components and distances to \(*\) depend only on the component indices, which are preserved. Hence
\[
        d_*(F_\vartheta a,F_\vartheta b)=d_*(a,b)
\]
for $a,b\in \X_*$, so \(F_\vartheta\) is \(1\)-Lipschitz. Since rotations preserve the Haar measure on each \(\X_j\), and since \(F_\vartheta\) fixes the null point \(*\), we get
\[
        (F_\vartheta)_\#\omega_*=\omega_*.
\]
Thus
\[
        F_\vartheta\in\Omega_{\X_*}^{\alpha,m},
\]
with $\alpha(t)=t$.

\smallskip
\noindent\textbf{Step 2: indistinguishability by a finite algorithm:}
Run $\Gamma$ on the identity map
\[
   F_0:=F_{(0,0,\ldots)}=\operatorname{id}_{\X_*}.
\]
The finite transcript of $\Gamma$ on $F_0$ consists of finitely many oracle queries
\[
   \lambda_{x,\varphi}(F_0)=\varphi(F_0(x)).
\]
Let $Q_\X \subset \X_*$ be the finite set of domain points $x$ which occur in these queries. Since there are infinitely many components $(\X_j)_{j\ge1}$, there exists a $j_0$ such that
\[
   Q_\X \cap \X_{j_0}=\varnothing.
\]

Choose an irrational number
\[
        \beta\in[0,1)\setminus\mathbb Q.
\]
Define \(F_1\) by
\[
        F_1=F_0 \quad\text{on }\X_*\setminus \X_{j_0},
\]
and
\[
        F_1(j_0,x)=(j_0,x+\beta\!\!\!\pmod 1).
\]
Then \(F_1\in\Omega_{\X_*}^{\alpha,m}\). For every queried pair $(x,\varphi)$ in the transcript on $F_0$, we have $F_1(x)=F_0(x)$, and hence
\[
   \lambda_{x,\varphi}(F_1)=\lambda_{x,\varphi}(F_0).
\]
By the consistency axiom we get
\[
   \Gamma(F_1)=\Gamma(F_0).
\] 

\smallskip
\noindent\textbf{Step 3: the two target sets:}
For \(F_0=\operatorname{id}_{\X_*}\),
\[
        \K_{F_0}=I,
\]
and hence
\[
        \sigma_{\inf}(\K_{F_0}-zI)=|1-z|.
\]
Therefore
\[
        R_{\mathrm{ap},\varepsilon}(\K_{F_0}) = C_{\mathrm{ap},\varepsilon}(\K_{F_0}) = \overline B(1,\varepsilon).
\]

For \(F_1\), the space decomposes as an \(L^p\)-direct sum
\[
        L^p(\X_*,\omega_*) = \bigoplus_{j\ge1}^{p} L^p(\X_j,2^{-j}m_{\mathbb T})
\]
up to the null point \(*\). The Koopman operator \(\K_{F_1}\) is the direct sum of identity operators on all components \(\X_j\), \(j\neq j_0\), and the irrational rotation operator \(U_\beta\) on \(\X_{j_0}\).

For an \(L^p\)-direct sum \(T=\bigoplus_j T_j\), the lower norm satisfies
\[
        \sigma_{\inf}(T-zI) = \inf_j \sigma_{\inf}(T_j-zI).
\]
Indeed, for every \(f=(f_j)\) with \(\|f\|_p= 1\),
\[
        \|(T-zI)f\|_p^p = \sum_j \|(T_j-zI)f_j\|_p^p \ge \left(\inf_j\sigma_{\inf}(T_j-zI)\right)^p \sum_j\|f_j\|_p^p = \left(\inf_j\sigma_{\inf}(T_j-zI)\right)^p.
\]
The reverse inequality is obtained by taking \(f\) supported in a single block and then taking the infimum over the block index. Consequently,
\[
        \sigma_{\inf}(\K_{F_1}-zI) = \min\left\{ |1-z|, \sigma_{\inf}(U_\beta-zI) \right\} = \min\left\{ |1-z|, \operatorname{dist}(z,\mathbb T) \right\}.
\]
Since \(1\in\mathbb T\),
\[
        \operatorname{dist}(z,\mathbb T)\le |z-1|.
\]
Therefore
\[
        \sigma_{\inf}(\K_{F_1}-zI) = \operatorname{dist}(z,\mathbb T).
\]
By \cref{prop:rot_spectral_identity_Lp},
\[
        R_{\mathrm{ap},\varepsilon}(\K_{F_1}) = C_{\mathrm{ap},\varepsilon}(\K_{F_1}) = \{z:\operatorname{dist}(z,\mathbb T)\le\varepsilon\}.
\]

\smallskip
\noindent\textbf{Step 4: Hausdorff separation:}
Denote for short
\[
        A_\varepsilon:=\overline B(1,\varepsilon), \quad B_\varepsilon:=\{z : \operatorname{dist}(z,\mathbb T)\le\varepsilon\}.
\]
Since
\[
        |z-1|\le\varepsilon \, \Longrightarrow \, \operatorname{dist}(z,\mathbb T)\le\varepsilon,
\]
we have
\[
        A_\varepsilon\subseteq B_\varepsilon.
\]
The point
\[
        z_\varepsilon:=-(1+\varepsilon)
\]
belongs to \(B_\varepsilon\), because
\[
        \operatorname{dist}(-(1+\varepsilon),\mathbb T)=\varepsilon.
\]
Its distance to \(A_\varepsilon\) is
\[
        \operatorname{dist}(z_\varepsilon,A_\varepsilon) = |z_\varepsilon-1|-\varepsilon = (2+\varepsilon)-\varepsilon = 2.
\]
Conversely, for every \(z\in B_\varepsilon\),
\[
        |z|\le 1+\varepsilon,
\]
and therefore
\[
        \operatorname{dist}(z,A_\varepsilon) \le \max\{|z-1|-\varepsilon,0\} \le |z|+1-\varepsilon \le 2.
\]
Thus
\[
        d_H(A_\varepsilon,B_\varepsilon)=2.
\]

Hence the two target sets for \(F_0\) and \(F_1\) are different, while \(\Gamma(F_0)=\Gamma(F_1)\), contradiction. Therefore no finite general algorithm computes \(R_{\mathrm{ap},\varepsilon}\) or \(C_{\mathrm{ap},\varepsilon}\) on \(\Omega_{\X_*}^{\alpha,m}\).
\end{proof}

Combining the finite-algorithm obstruction with the one-limit upper bound on \(\Omega_{\X_*}^{\alpha,m}\) gives sharpness on this witness class.

\begin{corollary}[Sharpness on the known-modulus measure-preserving witness]\label{cor:wedge-sharpness}
On the witness space \(\X_*\) of \cref{thm:wedge-lower} we have
\[
        R_{\mathrm{ap},\varepsilon}(\K_F), \, C_{\mathrm{ap},\varepsilon}(\K_F) \in \Sigma_1^G\setminus\Delta_0^G \quad \text{on }\Omega_{\X_*}^{\alpha,m}
\]
with $\alpha(t)=t$.
\end{corollary}

\subsection{A Boundary Lower Bound For The Closed Nonsingular Problem}\label{sec:closed-boundary-lower}

The closed target \(C_{\mathrm{ap},\varepsilon}\) can be harder than the regularized target in the nonsingular non-measure-preserving case. The reason is that the boundary condition
\[
        \sigma_{\inf}(\K_F-zI)=\varepsilon
\]
may carry limiting information. The following atomic construction encodes a one-sided infinite-search predicate into the condition
\[
        0\in C_{\mathrm{ap},\varepsilon}(\K_F).
\]
It therefore shows that the additional outer limit in the known-modulus nonsingular closed problem is not merely an artifact of the proof.

\begin{lemma}[No one-sided finite-query tower for the monotone search predicate]\label{lem:monotone-predicate-no-Pi1}
Let
\[
        \mathcal A_{\mathrm{mon}} := \left\{ A=(a_{D,j})_{D,j\ge1}\in\{0,1\}^{\mathbb N^2} : a_{D+1,j}\le a_{D,j}\ \forall D,j \right\}.
\]
and
\[
        P(A)=1 \quad\Longleftrightarrow\quad \forall D\ge1\ \exists j\ge1\ a_{D,j}=1.
\]
There is no sequence of finite-coordinate general algorithms
\[
        \Delta_n:\mathcal A_{\mathrm{mon}}\to\{0,1\},
        \qquad n\in\mathbb N,
\]
such that
\[
        P(A)=1
        \quad\Longrightarrow\quad
        \Delta_n(A)=1\ \forall n,
\]
and
\[
        P(A)=0
        \quad\Longrightarrow\quad
        \Delta_n(A)=0
        \text{ for all sufficiently large }n.
\]
\end{lemma}

\begin{proof}
Assume that such a sequence \(\Delta_n\) exists. We first prove the following claim: if a finite-coordinate general algorithm \(\Delta : \mathcal A_{\mathrm{mon}}\to\{0,1\}\) satisfies
\[
        P(B)=1 \quad\Longrightarrow\quad \Delta(B)=1,
\]
then
\[
        \Delta(A)=1 \qquad \forall A\in\mathcal A_{\mathrm{mon}}.
\]

Indeed, suppose for contradiction that \(\Delta(A)=0\) for some \(A\in\mathcal A_{\mathrm{mon}}\). Let \(Q\subset\mathbb N^2\) be the finite set of coordinates queried by \(\Delta\) on input \(A\). By the consistency axiom for general algorithms, any \(B\in\mathcal A_{\mathrm{mon}}\) satisfying
\[
        b_{D,j}=a_{D,j},
\]
where $(D,j)\in Q$, also satisfies
\[
        \Delta(B)=\Delta(A)=0.
\]
We now construct such a \(B\) with \(P(B)=1\). Let
\[
        J_Q:=\{j : (D,j)\in Q\text{ for some }D\}.
\]
Choose $j_*\notin J_Q$. For each \(j\in J_Q\), define
\[
        L_j:=\max\{D : (D,j)\in Q\text{ and }a_{D,j}=1\},
\]
with the convention \(L_j=0\) if the set is empty. Define \(B\) by
\[
        b_{D,j_*}:=1
\]
for $D\ge1$, and
\[
        b_{D,j}:=
        \begin{cases}
        1,&D\le L_j,\\
        0,&D>L_j,
        \end{cases}
\]
where $j\in J_Q$ and $b_{D,j}:=0$ for $j\notin J_Q\cup\{j_*\}$. Then \(B\in\mathcal A_{\mathrm{mon}}\), because in each column the sequence \(D\mapsto b_{D,j}\) is nonincreasing. Moreover \(B\) agrees with \(A\) on every queried coordinate. To check this, let \((D,j)\in Q\). If \(a_{D,j}=1\), then \(D\le L_j\), so \(b_{D,j}=1\). If \(a_{D,j}=0\), then no queried row \(E\ge D\) in the same column can satisfy \(a_{E,j}=1\), because \(A\in\mathcal A_{\mathrm{mon}}\) would imply
\[
        a_{D,j}\ge a_{E,j}=1,
\]
contradiction. Hence \(L_j<D\), so \(b_{D,j}=0\).

Finally $b_{D,j_*}=1$ for $D\ge1$, so $P(B)=1$. This contradicts the assumption that \(\Delta(B)=1\) for every \(B\) with \(P(B)=1\). The claim is proved.

Apply the claim to each \(\Delta_n\). Since \(P(A)=1\) implies \(\Delta_n(A)=1\) for all \(n\), each \(\Delta_n\) satisfies the hypothesis of the claim. Therefore
\[
        \Delta_n(A)=1 \qquad \forall A\in\mathcal A_{\mathrm{mon}},\ \forall n.
\]
But there exist \(A\in\mathcal A_{\mathrm{mon}}\) with \(P(A)=0\), for example \(a_{D,j}=0\) for all \(D,j\). For such \(A\), the assumed eventual condition would require \(\Delta_n(A)=0\) for all sufficiently large \(n\), contradiction.
\end{proof}

\begin{theorem}[Boundary obstruction for the closed nonsingular problem]\label{thm:closed-boundary-lower}
Let \(1<p<\infty\) and \(0<\varepsilon<1\). There exists a compact atomic metric probability space
\[
        (\X_\sharp,d_\sharp,\mu_\sharp)
\]
and a fixed modulus
\[
        \alpha(t)=t
\]
such that
\[
        C_{\mathrm{ap},\varepsilon}(\K_F) \in \Pi_2^G(\Omega_{\X_\sharp}^{\alpha}) \setminus \Pi_1^G(\Omega_{\X_\sharp}^{\alpha}).
\]
\end{theorem}

\begin{proof}
The upper bound follows directly from \cref{cor:closed-upper-bounds}. It remains to prove the non-inclusion in \(\Pi_1^G\). Since the proof is a bit long we will split it for the sake of readability into five steps.

\smallskip
\noindent\textbf{Step 1: the compact atomic space:}
Set
\[
        \lambda_D:=\varepsilon+(1-\varepsilon)2^{-D},
\]
where $D\ge1$. Then
\[
        \varepsilon<\lambda_D<1, \quad \lambda_D\downarrow\varepsilon.
\]
Let
\[
        \X_\sharp := \{*\}\cup\{u_{D,j},v_{D,j} : D,j\ge1\}.
\]
Put
\[
        r_{D,j}:=2^{-(D+j+2)}.
\]
Define \(d_\sharp\) by
\[
        d_\sharp(*,u_{D,j})=d_\sharp(*,v_{D,j})=r_{D,j},
\]
\[
        d_\sharp(u_{D,j},v_{D,j})=\frac12 r_{D,j},
\]
and, for \((D,j)\neq(E,k)\),
\[
        d_\sharp(x_{D,j},y_{E,k})=r_{D,j}+r_{E,k},
\]
where
\[
        x_{D,j}\in\{u_{D,j},v_{D,j}\}, \quad y_{E,k}\in\{u_{E,k},v_{E,k}\}.
\]
This map is a metric. To see this, we note that the only nontrivial triangle cases are those involving two points in the same two-point component and a third point outside it; for those,
\[
        d_\sharp(u_{D,j},v_{D,j}) = \frac12 r_{D,j} \le 2 r_{D,j}+2r_{E,k} = d_\sharp(u_{D,j},y_{E,k})+d_\sharp(y_{E,k},v_{D,j}).
\]
All other cases either reduce to equality through the base point \(*\), or are immediate.

Since \(r_{D,j}\to 0\) as \(D+j\to\infty\), every sequence in \(\X_\sharp\) has a subsequence either eventually contained in a finite subset of \(\X_\sharp\), or converging to \(*\). Hence \(\X_\sharp\) is  compact. Define now
\[
        b_{D,j}:=2^{-(D+j+2)}.
\]
Since
\[
        \sum_{D,j\ge1}b_{D,j} = \frac14,
\]
and \(0<\lambda_D^p<1\), we get
\[
        \sum_{D,j\ge1}(1+\lambda_D^p)b_{D,j} < 2\sum_{D,j\ge1}b_{D,j} = \frac12.
\]
Define the probability measure \(\mu_\sharp\) by
\[
        \mu_\sharp(v_{D,j})=b_{D,j}, \quad \mu_\sharp(u_{D,j})=\lambda_D^p b_{D,j},
\]
and
\[
        \mu_\sharp(\{*\}) = 1-\sum_{D,j\ge1}(1+\lambda_D^p)b_{D,j}.
\]
Clearly, every point has positive measure by construction, so this defines indeed a probability measure.

\smallskip\noindent\textbf{Step 2: the coded maps:}
Let
\[
        A=(a_{D,j})_{D,j\ge1}\in\mathcal A_{\mathrm{mon}},
\]
where $\mathcal A_{\mathrm{mon}}$ is defined as in \cref{lem:monotone-predicate-no-Pi1}. Define \(F_A:\X_\sharp\to \X_\sharp\) by $F_A(*)=*$, and, on each two-point component,
\[
        F_A(u_{D,j}) =
        \begin{cases}
        v_{D,j},&a_{D,j}=1,\\
        u_{D,j},&a_{D,j}=0,
        \end{cases}
\]
\[
        F_A(v_{D,j}) =
        \begin{cases}
        u_{D,j},&a_{D,j}=1,\\
        v_{D,j},&a_{D,j}=0.
        \end{cases}
\]
Thus \(F_A\) is either the transposition of \(u_{D,j}\) and \(v_{D,j}\), or the identity, depending on \(a_{D,j}\).

Every \(F_A\) is an isometry of the metric space \(\X_\sharp\). Indeed, on each two-point component it either fixes both points or swaps them, and hence preserves the ''within-component'' distance. It fixes the base point \(*\), preserves the component labels \((D,j)\), and therefore also preserves all distances between different components and all distances to \(*\). Hence
\[
        F_A\in\Omega_{\X_\sharp}^{\alpha},
\]
with $\alpha(t)=t$.

\smallskip\noindent\textbf{Step 3: the Radon-Nikodym density:}
The map \(F_A\) is a bijection of the countable set \(\X_\sharp\), and every atom has positive measure. Hence \(F_A\) is nonsingular. The Radon-Nikodym density
\[
        \rho_A:=\frac{d(F_A)_\#\mu_\sharp}{d\mu_\sharp}
\]
is given pointwise by
\[
        \rho_A(x)=\frac{\mu_\sharp(F_A^{-1}\{x\})}{\mu_\sharp(\{x\})}.
\]
If \(a_{D,j}=0\), then \(F_A\) is the identity on the pair \(\{u_{D,j},v_{D,j}\}\), and therefore
\[
        \rho_A(u_{D,j})=\rho_A(v_{D,j})=1.
\]
If \(a_{D,j}=1\), then \(F_A\) swaps \(u_{D,j}\) and \(v_{D,j}\), so
\[
        \rho_A(v_{D,j}) = \frac{\mu_\sharp(u_{D,j})}{\mu_\sharp(v_{D,j})} = \lambda_D^p,
\]
and
\[
        \rho_A(u_{D,j}) = \frac{\mu_\sharp(v_{D,j})}{\mu_\sharp(u_{D,j})} = \lambda_D^{-p}.
\]
Also $\rho_A(*)=1$. Since \(\lambda_D>\varepsilon\),
\[
        \rho_A(x)\le\varepsilon^{-p}
\]
for $x\in \X_\sharp$. Thus
\[
        \rho_A\in L^\infty(\X_\sharp,\mu_\sharp),
\]
and \(\K_{F_A}\) is bounded.

\smallskip\noindent\textbf{Step 4: exact lower norm at the origin:}
By \cref{prop:boundednessLp}, for every \(f\in L^p(\X_\sharp,\mu_\sharp)\),
\[
        \|\K_{F_A}f\|_p^p = \int_{\X_\sharp}|f|^p\rho_A\,d\mu_\sharp.
\]
Therefore
\[
        \sigma_{\inf}(\K_{F_A}) = \operatorname*{ess\,inf}_{x\in \X_\sharp}\rho_A(x)^{1/p}.
\]
Indeed, the inequality
\[
        \|\K_{F_A}f\|_p \ge \left(\operatorname*{ess\,inf}_{x}\rho_A(x)^{1/p}\right)\|f\|_p
\]
holds for every \(f\). Conversely, because the space is purely atomic and all atoms have positive measure, the lower bound is attained or approximated by normalized indicators of atoms on which \(\rho_A^{1/p}\) attains (or approaches) its essential infimum. Using the density values from Step~3, we obtain
\[
        \sigma_{\inf}(\K_{F_A}) = \min\left\{1,\, \inf\{\lambda_D:a_{D,j}=1\text{ for some }j\} \right\},
\]
with the convention that the infimum over the empty set is \(+\infty\). It follows that
\[
        0\in C_{\mathrm{ap},\varepsilon}(\K_{F_A}) \quad\Longleftrightarrow\quad \sigma_{\inf}(\K_{F_A})\le\varepsilon \quad\Longleftrightarrow\quad P(A)=1.
\]
Indeed, if \(P(A)=1\), then for every \(D\) there exists \(j\) such that \(a_{D,j}=1\). Hence
\[
        \inf\{\lambda_D : a_{D,j}=1\text{ for some }j\} = \inf_D \lambda_D = \varepsilon.
\]
Conversely, if \(P(A)=0\), then there exists \(D_0\) such that
\[
        a_{D_0,j}=0 \qquad \forall j.
\]
Since \(A\in\mathcal A_{\mathrm{mon}}\), this implies
\[
        a_{D,j}=0 \qquad \forall D\ge D_0,\ \forall j.
\]
Thus active entries \(a_{D,j}=1\) can occur only in the finitely many rows \(D<D_0\). Hence
\[
        \sigma_{\inf}(\K_{F_A}) \ge \min\{1,\lambda_1,\ldots,\lambda_{D_0-1}\} > \varepsilon.
\]

\smallskip\noindent\textbf{Step 5: contradiction to \(\Pi_1^G\):}
Assume, for contradiction, that
\[
        C_{\mathrm{ap},\varepsilon}(\K_F)\in \Pi_1^G(\Omega_{\X_\sharp}^{\alpha}).
\]
By the definition of \(\Pi_1^G\), there is a one-limit tower of general algorithms
\[
        \Gamma_n:\Omega_{\X_\sharp}^{\alpha}\to \mathcal M_H
\]
converging to $C_{\mathrm{ap},\varepsilon}(\K_F)$, together with outer sets \(X_n(F)\in \mathcal M_H\) satisfying
\[
        C_{\mathrm{ap},\varepsilon}(\K_F)\subseteq X_n(F),
\]
and
\[
        d_H(X_n(F),\Gamma_n(F))\le 2^{-n}.
\]
Replacing \(X_n(F)\) by
\[
        Y_n(F):= \{z\in\mathbb C:\operatorname{dist}(z,\Gamma_n(F))\le 2^{-n}\},
\]
we may assume that the outer approximants are obtained from the general algorithm output itself. Indeed,
\[
        C_{\mathrm{ap},\varepsilon}(\K_F)\subseteq Y_n(F),
\]
because \(C_{\mathrm{ap},\varepsilon}(\K_F)\subseteq X_n(F)\) and \(d_H(X_n(F),\Gamma_n(F))\le2^{-n}\). Moreover,
\[
        Y_n(F)\to C_{\mathrm{ap},\varepsilon}(\K_F)
\]
in Hausdorff distance, since \(\Gamma_n(F)\to C_{\mathrm{ap},\varepsilon}(\K_F)\) and \(2^{-n}\to 0\). The map \(F\mapsto Y_n(F)\) is still a general algorithm, as it is obtained by post-processing \(\Gamma_n(F)\).

For \(A\in\mathcal A_{\mathrm{mon}}\), define now
\[
        \Delta_n(A) :=
        \begin{cases}
        1,&0\in Y_n(F_A),\\
        0,&0\notin Y_n(F_A).
        \end{cases}
\]
Each \(\Delta_n\) is a finite-coordinate general algorithm on \(\mathcal A_{\mathrm{mon}}\). Indeed, a finite transcript consists of finitely many oracle queries
\[
   \lambda_{x,\varphi}(F_A)=\varphi(F_A(x)).
\]
Only finitely many domain points $x\in \X_\sharp$ occur in such a transcript. If $x=\ast$, the value $F_A(x)=\ast$ is independent of $A$. If $x=u_{D,j}$ or $x=v_{D,j}$, then $F_A(x)$ is determined by the single
bit $a_{D,j}$. Hence $\Delta_n$ depends on only finitely many coordinates of $A$.

If \(P(A)=1\), then
\[
        0\in C_{\mathrm{ap},\varepsilon}(\K_{F_A}) \subseteq Y_n(F_A),
\]
i.e. $\Delta_n(A)=1$. If \(P(A)=0\), then Step~4 gives
\[
        \sigma_{\inf}(\K_{F_A})>\varepsilon.
\]
Set
\[
        \delta_A:=\sigma_{\inf}(\K_{F_A})-\varepsilon>0.
\]
Since
\[
        |\sigma_{\inf}(\K_{F_A}-zI)-\sigma_{\inf}(\K_{F_A})| \le |z|,
\]
we have
\[
        B(0,\delta_A/2)\cap C_{\mathrm{ap},\varepsilon}(\K_{F_A})=\emptyset.
\]
Because
\[
        Y_n(F_A)\to C_{\mathrm{ap},\varepsilon}(\K_{F_A})
\]
in Hausdorff distance, it follows that $0\notin Y_n(F_A)$ for all sufficiently large \(n\). Hence $\Delta_n(A)=0$ for all sufficiently large \(n\).

Thus \(\Delta_n\) satisfies the two conditions ruled out by \cref{lem:monotone-predicate-no-Pi1}. This contradiction proves
\[
        C_{\mathrm{ap},\varepsilon}(\K_F) \notin \Pi_1^G(\Omega_{\X_\sharp}^{\alpha}).
\]
\end{proof}

We summarize the results proved in this work.

\begin{table}[h]
\centering
\[
\begin{array}{c|cccc}
\text{target}
&\Omega_\X
&\Omega_\X^\alpha
&\Omega_\X^m
&\Omega_\X^{\alpha,m}
\\
\hline
R_{\mathrm{ap},\varepsilon}
&\Sigma_2^G
&\Sigma_1^G
&\Sigma_2^G
&\Sigma_1^G
\\
C_{\mathrm{ap},\varepsilon}
&\Pi_3^G
&\Pi_2^G
&\Sigma_2^G
&\Sigma_1^G
\\
\sigma_{\mathrm{ap}}
&\Pi_3^G
&\Pi_2^G
&\Pi_3^G
&\Pi_2^G
\end{array}
\]
\caption{Upper bounds proved in \cref{thm:SCI-upper-bounds-table}.}
\label{tab:upper-bounds}
\end{table}

\begin{table}[h]
\centering
\[
\begin{array}{c|c|c}
\text{target}
&\text{witness class}
&\text{proved lower bound}
\\
\hline
R_{\mathrm{ap},\varepsilon}
&\Omega_{\X_*}^{\alpha,m}
&\notin\Delta_0^G
\\
C_{\mathrm{ap},\varepsilon}
&\Omega_{\X_*}^{\alpha,m}
&\notin\Delta_0^G
\\
C_{\mathrm{ap},\varepsilon}
&\Omega_{\X_\sharp}^{\alpha}
&\notin\Pi_1^G
\end{array}
\]
\caption{Lower witness results proved in \cref{thm:wedge-lower} and \cref{thm:closed-boundary-lower}. Here, in the last row, $0<\varepsilon<1$.}
\label{tab:lower-witnesses}
\end{table}

\subsection{The Remaining No-Modulus Exact-Spectrum Lower Bound}\label{sec:openProblem}

We have shown the upper bound
\begin{equation}\label{eq:openProblem}
        \sigma_{\mathrm{ap}}(\K_F)\in\Pi_3^G \quad \text{on }\Omega_\X^m.
\end{equation}
It turns out that the lower bound is highly nontrivial to prove. A natural candidate would be locking all finite-stage outputs to a reference input based on an idea in \cite[Theorem~B.1, Step~5]{colbrook2024limits}. The following elementary observation shows why this is impossible and why \eqref{eq:openProblem} is a serious remaining open problem.

\begin{proposition}[Locked finite stages cannot encode a lower-bound source]\label{prop:locking-no-go}
Let \(\Xi:\Omega\to \mathcal M_H\) be a compact-set-valued computational problem, and
let
\[
        \{\Gamma_{n_2,n_1}\}_{n_2,n_1\in\mathbb N}
\]
be a height-two tower of general algorithms converging to \(\Xi\). Let \(F_0\in\Omega\), and let
\[
        \{F_A:A\in\mathcal A\}\subset\Omega
\]
be a family such that
\[
        \Gamma_{n_2,n_1}(F_A)=\Gamma_{n_2,n_1}(F_0) \qquad \forall A\in\mathcal A,\ \forall n_1,n_2\in\mathbb N.
\]
Then
\[
        \Xi(F_A)=\Xi(F_0) \qquad \forall A\in\mathcal A.
\]
In particular, such a locking construction cannot prove a lower bound by encoding an \(A\)-dependent decision problem into \(\Xi(F_A)\).
\end{proposition}

\begin{proof}
Fix \(A\in\mathcal A\). For every fixed \(n_2\), the inner-limit sequences are identical, i.e.
\[
        \Gamma_{n_2,n_1}(F_A)=\Gamma_{n_2,n_1}(F_0)
\]
for $n_1\in\mathbb N$. Therefore their inner limits are identical:
\[
        \Gamma_{n_2}(F_A) := \lim_{n_1\to\infty}\Gamma_{n_2,n_1}(F_A) = \lim_{n_1\to\infty}\Gamma_{n_2,n_1}(F_0) =: \Gamma_{n_2}(F_0).
\]
Taking the outer limit gives
\[
        \Xi(F_A) = \lim_{n_2\to\infty}\Gamma_{n_2}(F_A) = \lim_{n_2\to\infty}\Gamma_{n_2}(F_0) = \Xi(F_0).
\]
Thus the target value cannot depend on \(A\).
\end{proof}

\begin{problem}[No-modulus exact-spectrum sharpness]\label{prob:p4}
Does one have
\[
        \sigma_{\mathrm{ap}}(\K_F)\notin \Pi_2^G \quad \text{on }\Omega_\X^m?
\]
\end{problem}

\begin{remark}[What a proof of \cref{prob:p4} would roughly require]
A proof cannot be obtained by locking all finite-stage outputs to a reference input, by \cref{prop:locking-no-go}. A viable proof would need a finite-information spectral realization of a height-three source: one would need a compact metric probability space \((\X,\omega)\), a family
\[
        A\mapsto F_A\in\Omega_\X^m,
\]
and a \(\Pi^0_3\)-complete predicate
\[
        \widehat\Xi_3(A) = 1 \iff (\forall n_1)(\exists n_2)(\forall n_3) \bigl[a_{\beta(n_1,n_2,n_3)}=1\bigr],
\]
such that finite point-evaluation transcripts of \(F_A\) are determined by finitely many bits of \(A\), while the compact sets $\sigma_{\mathrm{ap}}(\K_{F_A})$ fall into Hausdorff-separated marker classes according to \(\widehat\Xi_3(A)\). Constructing such a continuous measure-preserving finite-information realization is the missing ingredient.
\end{remark}

\section{Conclusion And Outlook}\label{sec:concl_outlook}

We studied residual SCI bounds for approximate point spectral sets of bounded Koopman operators on \(L^p(\mathcal X,\omega)\), \(1<p<\infty\), under point-evaluation oracles for the underlying map \(F\). A central feature of the Banach-space setting is the distinction between the compact regularized target $R_{\mathrm{ap},\varepsilon}(T)$ and the closed lower-norm target $C_{\mathrm{ap},\varepsilon}(T)$ for $T \in \mathcal B(\Y)$, where $\Y$ is a complex Banach space. The second set is the closed approximate point \(\varepsilon\)-pseudospectrum, whereas the first is the compact object naturally produced by strict residual sublevel tests.

On the upper-bound side, we constructed residual towers using continuous finite-dimensional trial spaces and tagged quadrature residuals for the actual functions
\[
        g\circ F-zg,
\]
where \(g\) ranges over finite-dimensional trial observables and \(z\in\mathbb C\) is the spectral parameter. This avoids the conditional-expectation sampling issue that would arise if one tried to compute martingale truncation norms from point samples of \(F\). The resulting upper bounds can be found in \cref{tab:upper-bounds}.

On the lower-bound side, the fixed-\(\varepsilon\) results proved here are witness results. On a compact wedge of shrinking circles with a fixed Lipschitz modulus, no finite general algorithm computes either
\(R_{\mathrm{ap},\varepsilon}(\K_F)\) or \(C_{\mathrm{ap},\varepsilon}(\K_F)\) on the measure-preserving known-modulus class. Thus the one-limit upper bound is sharp on that witness class. For the closed nonsingular problem, we constructed a compact atomic system in which the boundary condition
\[
        0\in C_{\mathrm{ap},\varepsilon}(\K_F)
\]
encodes a monotone infinite-search predicate.

The no-modulus sharpness of the exact approximate point spectrum remains open. The present work proves
\[
        \sigma_{\mathrm{ap}}(\K_F)\in\Pi_3^G \quad \text{on }\Omega_{\mathcal X}^{m},
\]
but does not claim the matching lower bound
\[
        \sigma_{\mathrm{ap}}(\K_F)\notin\Pi_2^G.
\]
\Cref{prop:locking-no-go} explains why the natural locking strategy cannot prove such a result: if all finite-stage outputs are locked to a reference input, then the iterated limits are locked as well. A proof of \cref{prob:p4} would require a new finite-information continuous measure-preserving realization of a \(\Pi^0_3\)-complete source predicate.

We conclude with several example directions for future research.
\begin{itemize}
\item \textbf{Endpoint spaces \(L^1\) and \(L^\infty\):}
In \(L^1\), one may still hope to build residual towers from dense simple or continuous trial spaces, but the isometry theorem
\[
        C_{\mathrm{ap},\varepsilon}=R_{\mathrm{ap},\varepsilon}
\]
used here for \(1<p<\infty\) depends on complex uniform convexity of \(L^p\), which is no longer available at \(p=1\). Thus the closed target may need a separate consideration. In \(L^\infty\), nonseparability prevents a direct countable dense-trial-space argument in the norm topology; possible replacements include partition nets, weak-\(*\) compactness, or oracle models based on averages over atoms rather than pointwise values. The fixed-\(\varepsilon\) witness mechanisms based on rotations and finite point-evaluation obstructions are expected to remain informative, but endpoint upper bounds require discretizations adapted to the endpoint topology.

\item \textbf{Koopman operators on \(\mathrm{BV}\) and anisotropic spaces:}
Adapting the SCI framework to spaces such as \(\mathrm{BV}\) raises different questions. The space \(\mathrm{BV}\) is not reflexive, step functions are not dense in the \(\mathrm{BV}\)-norm in the same way as in \(L^1\), and the Koopman operator may fail to preserve \(\mathrm{BV}\) unless the underlying dynamics has additional regularity. For classes of maps where the Koopman operator is well-defined on \(\mathrm{BV}\), a plausible route is to use Ulam-type partition schemes and obtain residual control from Lasota-Yorke inequalities; see, for example, \cite{LasotaYorkeInvariant,KellerBV,BaladiPos}. Under suitable $\mathrm{BV}$-preserving hypotheses, one may expect analogues of the known-modulus upper bounds,
\[
        R_{\mathrm{ap},\varepsilon}\in\Sigma_1^G, \qquad \sigma_{\mathrm{ap}}\in\Pi_2^G,
\]
while the closed target \(C_{\mathrm{ap},\varepsilon}\) may again require a separate boundary consideration.

\item \textbf{No-modulus exact-spectrum sharpness:}
The most important missing sharpness question might be \cref{prob:p4}. The known-modulus and fixed-\(\varepsilon\) witnesses in this paper do not resolve it. A successful proof would need to encode a height-three source into the exact approximate point spectrum of continuous measure-preserving Koopman maps, while keeping every finite point-evaluation transcript finitely determined by the source data. This appears to require a fundamental new finite-information measure-preserving spectral realization, rather than a locking argument.
\end{itemize}

\textbf{Acknowledgments } The author thanks Christian Lange for helpful feedback on small typos in the first version of this manuscript, Giovanni Fantuzzi for introducing Koopman operators, Lorenzo Liverani and Gustav Conradie for discussions and Enrique Zuazua for the opportunity of working with the members of the chair FAU-DCN. The author is also grateful to the anonymous referee for a careful and detailed report on an earlier version of this work. Several corrections and improvements in the present version could be found in (or were inspired by) the report, including the distinction between the regularized and closed approximate point \(\varepsilon\)-pseudospectra, the replacement of the earlier conditional-expectation sampling argument by tagged quadrature residuals, and the reformulation of the no-modulus exact-spectrum lower bound as an open problem.

\textbf{Funding } This research did not receive any specific grant from funding agencies in the public, commercial, or not-for-profit sectors.

\textbf{Statement } During the preparation of this work the author used chatGPT4o (through FAU's HAWKI) in order to improve the language in the abstract and introduction. After using this tool, the author reviewed and edited the content as needed and takes full responsibility for the content of the published article.

\textbf{Conflict of interest } The author declares no conflict of interest.

    \bibliographystyle{alpha}  
    \bibliography{bib.bib}

\appendix
\section{Natural Approaches To \Cref{prob:p4} That Do Not Work}\label{sec:failed-p4-approaches}
This appendix records several natural approaches, beyond the locking obstruction of \cref{prop:locking-no-go}, that do not prove \cref{prob:p4}. The results below are therefore \textit{not} lower bounds for
\(\sigma_{\mathrm{ap}}\). Thus they isolate the missing ingredient: a finite-information continuous measure-preserving spectral realization of a height-three source predicate.

Throughout the appendix, \(E_q\) denotes the set of \(q\)-th roots of unity, i.e.
\[
        E_q:=\{e^{2\pi i k/q} : k=0,\ldots,q-1\}\subset\mathbb T .
\]

\subsection{Dense Locking Fails For Continuous Maps}

A common adversarial idea is to run a tower on a reference map \(F_0\), collect the queried points, and modify the map away from those points. For continuous maps this fails as soon as the queried set is dense.

\begin{lemma}[Dense agreement forces equality]\label{lem:dense-agreement-forces-equality}
Let \(\X\) be a compact metric space, and let \(F,G: \X \to \X\) be continuous. If there is a dense set \(D\subseteq \X\) such that
\[
        F(x)=G(x) \qquad \forall x\in D,
\]
then
\[
        F=G \quad \text{on } \X.
\]
\end{lemma}

\begin{proof}
Let \(x\in \X\). Since \(D\) is dense, choose a sequence
\[
        x_n\in D, \, x_n\to x.
\]
By continuity,
\[
        F(x) = \lim_{n\to\infty}F(x_n) = \lim_{n\to\infty}G(x_n) = G(x).
\]
Since \(x\in \X\) was arbitrary, \(F=G\).
\end{proof}

\begin{corollary}[Countable locking cannot be used blindly]\label{cor:countable-locking-fails}
Let \(F_0: \X \to \X\) be continuous. Suppose a lower-bound construction attempts to build, for each code \(A\), a continuous map \(F_A: \X \to \X\) such that
\[
        F_A(x)=F_0(x)
\]
on every point queried by a two-limit tower along \(F_0\). If the union of these queried points is dense in \(\X\), then every such continuous \(F_A\) must satisfy
\[
        F_A=F_0.
\]
Consequently, this strategy cannot produce different approximate point spectra.
\end{corollary}

\begin{proof}
Apply \cref{lem:dense-agreement-forces-equality} with \(D\) equal to the countable union of queried points.
\end{proof}

\subsection{Baire-Category Arguments Do Not Apply To The Unrestricted Type-G}

Another natural idea is to prove the lower bound by showing that the target map has high Borel or Baire complexity. This does not work in the unrestricted type-\(G\) model, because finite general algorithms need not be Baire measurable.

\begin{proposition}[Finite type-\(G\) algorithms need not be Baire measurable]\label{prop:typeG-not-baire}
Let \(\X=\mathbb T=\mathbb R/\mathbb Z\) with Haar probability measure, and let
\[
        F_\theta(x)=x+\theta \pmod 1,
\]
for $\theta\in\mathbb T$. Let
\[
        \mathcal R:=\{F_\theta : \theta\in\mathbb T\}\subseteq\Omega_\X^m.
\]
Consider the restricted computational problem on \(\mathcal R\) with the point-value oracle \(F\mapsto F(x)\). Equivalently, since the evaluation set is required to be complex-valued, identify \(\mathbb T\) with the unit circle and include the character
\[
        \chi_1(x)=e^{2\pi i x}
\]
in the evaluation set. Then there exists a finite type-\(G\) algorithm
\[
        \Gamma:\mathcal R\to \mathcal M_H
\]
whose parameter map
\[
        \theta\mapsto\Gamma(F_\theta)
\]
is not Baire measurable.
\end{proposition}

\begin{proof}
Let \(B\subseteq\mathbb T\) be a set without the Baire property; see, for example, \cite[Ch.~4, Theorem~4.8]{oxtoby2013measure}. Define
\[
        K_0:=\{0\}, \, K_1:=\{1\}
\]
and set
\[
        \Gamma(F_\theta) :=
        \begin{cases}
        K_1,&\theta\in B,\\
        K_0,&\theta\notin B.
        \end{cases}
\]

We claim that \(\Gamma\) is a finite general algorithm on \(\mathcal R\). With the point-value oracle, it queries the single point \(0\). Since
\[
        F_\theta(0)=\theta,
\]
this one query determines \(\theta\) completely inside the restricted class \(\mathcal R\). In the complex-valued formulation, the single query
\[
        \chi_1(F_\theta(0))=e^{2\pi i\theta}
\]
also determines \(\theta\in\mathbb T\). Hence if another rotation \(F_\phi\) has the same queried value, then \(\phi=\theta\), and therefore
\[
        \Gamma(F_\phi)=\Gamma(F_\theta).
\]
Thus the consistency axiom for general algorithms is satisfied.

Let
\[
        \mathcal U:=\{K\in \mathcal M_H : d_H(K,K_1) < 1/3\}.
\]
Then
\[
        \{\theta\in\mathbb T:\Gamma(F_\theta)\in\mathcal U\}=B.
\]
Since \(B\) does not have the Baire property, the map
\[
        \theta\mapsto\Gamma(F_\theta)
\]
is not Baire measurable.
\end{proof}

\begin{remark}
Thus a Baire-category or Borel-rank proof may be appropriate for a regularity-restricted SCI model, such as a Borel, Baire, continuous, arithmetic, or Type-2 model. It does not prove a lower bound in unrestricted type \(G\).
\end{remark}

\subsection{Global Switches Encode The Hard Predicate Too Early}

A tempting construction is to choose two maps \(F_0,F_1\in\Omega_\X^m\) with Hausdorff-separated approximate point spectra and define
\[
        F_A=
        \begin{cases}
        F_0, & \widehat\Xi_3(A)=0,\\
        F_1, & \widehat\Xi_3(A)=1.
        \end{cases}
\]
This does not give a valid finite-information realization of \(\widehat\Xi_3\).

\begin{proposition}[Global switches are not finite-information realizations]\label{prop:global-switch-fails}
Let
\[
        \widehat\Xi_3:\{0,1\}^{\mathbb N}\to\{0,1\}
\]
be a source predicate which is not computable by a finite-coordinate general algorithm. Suppose \(\X\) is a set and \(A\mapsto F_A: \X \to \X\) is a coded family of maps. Assume that there exists
\[
        x_0\in \X, \, \varphi: \X \to\mathbb C,
\]
and two distinct complex numbers \(c_0\neq c_1\) such that
\[
        \varphi(F_A(x_0))= c_{\widehat\Xi_3(A)} \qquad \forall A\in\{0,1\}^{\mathbb N}.
\]
If the scalar value
\[
        A\mapsto\varphi(F_A(x_0))
\]
is determined by finitely many coordinates of \(A\) through a finite-coordinate general algorithm, then \(\widehat\Xi_3\) is finite-coordinate computable. Consequently, such a global switch cannot be a valid finite-information realization of a height-three source.
\end{proposition}

\begin{proof}
By the finite-information assumption, there is a finite-coordinate procedure which, on input \(A\), queries finitely many bits of \(A\) and outputs
\[
        \varphi(F_A(x_0))\in\{c_0,c_1\}.
\]
Since \(c_0\neq c_1\), we can post-process this output by the finite rule
\[
        c_0\mapsto0, \, c_1\mapsto1.
\]
The result is a finite-coordinate general algorithm for
\[
        A\mapsto\widehat\Xi_3(A),
\]
contradicting the assumption that \(\widehat\Xi_3\) is not finite-coordinate computable.
\end{proof}

\subsection{Finite Cyclic Blocks Only Detect Boundedness Of Periods}

\begin{lemma}[Closure of finite cyclic spectra]\label{lem:finite-cycles-collapse}
Let \((q_j)_{j\ge1}\) in $\mathbb{N}$, and define
\[
        K(q):=\overline{\bigcup_{j\ge1}E_{q_j}} \subseteq\mathbb T.
\]
Then
\[
        K(q) =
        \begin{cases}
        \displaystyle\bigcup_{q\in S}E_q,
        &\text{if }\{q_j:j\ge1\}=S\text{ is finite},\\[2mm]
        \mathbb T,
        &\text{if }(q_j)_{j\ge1}\text{ is unbounded}.
        \end{cases}
\]
\end{lemma}

\begin{proof}
If \((q_j)\) is bounded, then only finitely many integers occur among the \(q_j\). Hence
\[
        \bigcup_{j\ge1}E_{q_j} =
        \bigcup_{q\in S}E_q
\]
for a finite set \(S\subset\mathbb N\), and the right-hand side is finite and therefore closed.

Assume now that \((q_j)\) is unbounded. Let \(I\subset\mathbb T\) be a non-empty open arc. Choose \(j\) so large that
\[
        q_j>\frac{1}{|I|},
\]
where \(|I|\) denotes the normalized arc length. The grid \(E_{q_j}\) is \(1/q_j\)-dense in \(\mathbb T\), so \(E_{q_j}\cap I\neq\emptyset\). Thus every non-empty open arc intersects \(\bigcup_jE_{q_j}\). Hence this union is dense in \(\mathbb T\), and its closure is \(\mathbb T\).
\end{proof}

\begin{proposition}[Finite-cycle direct sums are too coarse for a height-three source]\label{prop:finite-cycles-too-coarse}
Consider a measure-preserving Koopman direct sum whose components are finite cycles with periods \(q_j(A)\), and suppose each \(q_j(A)\) is determined by finitely many bits of a code \(A\). Then the associated exact approximate point spectrum has the form
\[
        \sigma_{\mathrm{ap}}(\K_{F_A}) = \overline{\bigcup_{j\ge1}E_{q_j(A)}}.
\]
Consequently, the only nontrivial global dichotomy visible from the periods is whether the sequence \((q_j(A))\) is bounded or unbounded. The condition
\[
        \sup_j q_j(A)=\infty
\]
has the arithmetical form
\[
        \forall M\ \exists j\ [q_j(A)\ge M],
\]
where the bracketed predicate is finite-coordinate decidable. Thus this route naturally yields at most a \(\Pi^0_2\)-type source, not a \(\Pi^0_3\)-complete source.
\end{proposition}

\begin{proof}
The spectral formula follows because the Koopman operator of a \(q\)-cycle has eigenvalues \(E_q\), and the approximate point spectrum of an \(L^p\)-direct sum contains and is contained in the closure of the union of the block approximate point spectra. Hence
\[
        \sigma_{\mathrm{ap}}(\K_{F_A}) = \overline{\bigcup_{j\ge1}E_{q_j(A)}}.
\]
\Cref{lem:finite-cycles-collapse} shows that this compact set is finite if the periods are bounded and is the full unit circle if the periods are unbounded. If each \(q_j(A)\) is determined by finitely many bits of \(A\), then
\[
        q_j(A)\ge M
\]
is a finite-coordinate condition. Therefore
\[
        \sup_j q_j(A)=\infty \, \iff \, \forall M\ \exists j\ [q_j(A)\ge M]
\]
has only two-quantifier form over finite-coordinate predicates. Such a construction does not realize a \(\Pi^0_3\)-complete source unless the hard predicate has already been hidden in the definition of the periods, which would violate finite-information realizability.
\end{proof}

\subsection{Rank-One Odometers Are Too Coarse}

A second natural refinement is to replace finite cyclic blocks by odometers. We show now that rank-one odometers still do not carry enough spectral information for \cref{prob:p4}.

\begin{lemma}[Rank-one odometer spectra]\label{lem:rank-one-odometer-too-coarse}
Let \(b=(b_n)_{n\ge1}\) be a sequence of natural numbers and put
\[
        Q_n:=\prod_{\ell=1}^{n}b_\ell .
\]
For \(m\ge n\), let
\[
        \pi_{m,n}:\mathbb Z/Q_m\mathbb Z\to\mathbb Z/Q_n\mathbb Z
\]
be the canonical reduction map. Define the compact profinite abelian group
\[
        X_b := \varprojlim(\mathbb Z/Q_n\mathbb Z,\pi_{n+1,n})
\]
by
\[
        X_b = \left\{ (x_n)_{n\ge1} : x_n\in\mathbb Z/Q_n\mathbb Z,\ \pi_{n+1,n}(x_{n+1})=x_n\ \forall n \right\}.
\]
Let \(m_b\) be Haar probability measure on \(X_b\) and
\[
        \mathbf 1_b:=(1+Q_n\mathbb Z)_{n\ge1}\in X_b
\]
and define the odometer translation
\[
        T_bx:=x+\mathbf 1_b.
\]
Let
\[
        \mathcal U_b:= \K_{T_b}
\]
on \(L^p(X_b,m_b)\), for \(1<p<\infty\). Then
\[
        \sigma_{\mathrm{ap}}(\mathcal U_b) = \overline{\bigcup_{n\ge1}E_{Q_n}}.
\]
Consequently,
\[
        \sigma_{\mathrm{ap}}(\mathcal U_b) =
        \begin{cases}
        E_Q, &\text{if }Q_n=Q\text{ eventually},\\[1mm]
        \mathbb T, &\text{if }Q_n\to\infty.
        \end{cases}
\]
\end{lemma}

\begin{proof}
The sequence \((Q_n)\) is nondecreasing and \(Q_n\mid Q_{n+1}\). Hence either \(Q_n\) is eventually constant, or \(Q_n\to\infty\).

First suppose that \(Q_n=Q\) eventually. Then the inverse system stabilizes, and
\[
        X_b\cong\mathbb Z/Q\mathbb Z.
\]
Under this identification, \(T_b\) is the cyclic translation
\[
        x\mapsto x+1
\]
on the finite cyclic group \(\mathbb Z/Q\mathbb Z\). Therefore \(\mathcal U_b\) is the cyclic permutation operator of order \(Q\). Its eigenvalues are precisely $E_Q$. Since the underlying space is finite-dimensional in this case,
\[
        \sigma_{\mathrm{ap}}(\mathcal U_b)=\sigma(\mathcal U_b)=E_Q.
\]

Now suppose that \(Q_n\to\infty\). For \(n\in \mathbb{N}\) and \(k=0,\ldots,Q_n-1\), define
\[
        \chi_{n,k}(x) := \exp \left(\frac{2\pi i k x_n}{Q_n}\right), \, x=(x_m)_{m\ge1}\in X_b.
\]
This is a continuous character of \(X_b\), factoring through the quotient
\[
        X_b\to\mathbb Z/Q_n\mathbb Z.
\]
By the character-eigenvalue formula for compact-group rotations \cite[Proposition~14.24]{eisner2015operator}, applied to the compact group \(G=X_b\) and the rotation element \(a=\mathbf 1_b\), \(\chi_{n,k}\) is an eigenfunction of \(\mathcal U_b\) with eigenvalue
\[
        \chi_{n,k}(\mathbf 1_b) = e^{2\pi i k/Q_n}.
\]
Thus
\[
        \bigcup_{n\ge1}E_{Q_n} \subseteq \sigma_{\mathrm{ap}}(\mathcal U_b).
\]
Since \(Q_n\to\infty\), \cref{lem:finite-cycles-collapse} implies
\[
        \overline{\bigcup_{n\ge1}E_{Q_n}}=\mathbb T.
\]
Hence
\[
        \mathbb T\subseteq\sigma_{\mathrm{ap}}(\mathcal U_b).
\]

Conversely, \(T_b\) preserves the Haar measure, so \(\mathcal U_b\) is an \(L^p\)-isometry. For every unit vector \(f\in L^p(X_b,m_b)\),
\[
        \|(\mathcal U_b-zI)f\|_p \ge \bigl|\| \mathcal U_bf\|_p-|z|\|f\|_p\bigr| = \bigl|1-|z|\bigr|.
\]
Therefore
\[
        \sigma_{\inf}(\mathcal U_b-zI) \ge \operatorname{dist}(z,\mathbb T).
\]
In particular, if \(z\notin\mathbb T\), then
\[
        \sigma_{\inf}(\mathcal U_b-zI)>0,
\]
so \(z\notin\sigma_{\mathrm{ap}}(\mathcal U_b)\). Hence
\[
        \sigma_{\mathrm{ap}}(\mathcal U_b)=\mathbb T
\]
when \(Q_n\to\infty\).
\end{proof}

\begin{remark}
Thus rank-one odometers again produce only a finite-root-set vs. full-circle dichotomy. They do not provide a Hausdorff-separated coding of a \(\Pi^0_3\)-complete source. A more complicated Toeplitz or Bratteli-Vershik construction would have to encode essential-period information without reducing to the bounded/unbounded denominator dichotomy.
\end{remark}

\subsection{Known-Modulus Witnesses Cannot Prove The No-Modulus Lower Bound}

The wedge construction proves sharpness of a one-limit known-modulus fixed-\(\varepsilon\) upper bound. It cannot prove the no-modulus exact-spectrum lower bound.

\begin{proposition}[Known-modulus witnesses cannot prove \cref{prob:p4}]\label{prop:known-modulus-cannot-prove-p4}
Let \(\mathcal C\subseteq\Omega_\X^{\alpha,m}\) be any known-modulus measure-preserving subclass. The restriction of the exact approximate point spectrum problem to \(\mathcal C\) satisfies
\[
        \sigma_{\mathrm{ap}}(\K_F)\in\Pi_2^G \quad \text{on }\mathcal C.
\]
Therefore no lower bound proved solely on such a known-modulus subclass can imply
\[
        \sigma_{\mathrm{ap}}(\K_F)\notin\Pi_2^G \quad \text{on }\Omega_\X^m.
\]
\end{proposition}

\begin{proof}
By \cref{cor:ap-upper-bounds},
\[
        \sigma_{\mathrm{ap}}(\K_F)\in\Pi_2^G \quad \text{on }\Omega_\X^{\alpha,m}.
\]
Restricting a valid \(\Pi_2^G\)-tower from \(\Omega_\X^{\alpha,m}\) to any subclass
\[
        \mathcal C\subseteq\Omega_\X^{\alpha,m}
\]
gives a valid \(\Pi_2^G\)-tower on \(\mathcal C\). Hence
\[
        \sigma_{\mathrm{ap}}(\K_F)\in\Pi_2^G \quad \text{on }\mathcal C.
\]
Therefore a witness contained entirely in a known-modulus class cannot prove non-membership in \(\Pi_2^G\) for the no-modulus class \(\Omega_\X^m\).
\end{proof}

\subsection{Nonsingular Boundary Gadgets Do Not Solve The Measure-Preserving Exact Problem}

The boundary gadget used for
\[
        C_{\mathrm{ap},\varepsilon}(\K_F)
\]
in the nonsingular class exploits variation of the Radon-Nikodym density. This mechanism is unavailable in the measure-preserving class.

\begin{proposition}[Boundary-density gadgets cannot be directly transferred to \(\Omega_\X^m\)]\label{prop:boundary-gadget-not-mp}
Let \(F: \X \to \X\) be measure-preserving. Then
\[
        \rho_F:=\frac{dF_\#\omega}{d\omega}=1 \quad \text{a.e.}
\]
Consequently,
\[
        \|\K_F f\|_p=\|f\|_p
\]
for $f\in L^p(\X,\omega)$. Any construction whose spectral marker is produced by varying the values of
\[
        \rho_F^{1/p}
\]
cannot lie in \(\Omega_\X^m\).
\end{proposition}

\begin{proof}
This follows directly from \cref{prop:boundednessLp}.
\end{proof}


\end{document}